\def\BibTeX{{\rm B\kern-.05em{\sc i\kern-.025em b}\kern-.08em
    T\kern-.1667em\lower.7ex\hbox{E}\kern-.125emX}}
\newtheorem{theorem}{Theorem}
\newtheorem{definition}{Definition}
\newtheorem{lemma}{Lemma}
\newtheorem{remark}{Remark}
\newtheorem{proposition}{Proposition}
\newtheorem{assumption}{Assumption}
\newtheorem{corollary}{Corollary}
\newcommand{\Rn}{\mathbb{R}^n}
\newcommand{\ra}{\rightarrow}
\newcommand{\p}{\partial}
\DeclareMathOperator*{\tr}{tr}
\DeclareMathOperator*{\dvg}{div}
\DeclareMathOperator*{\grad}{grad}
\DeclareMathOperator*{\Hess}{Hess}
\DeclareMathOperator*{\vol}{vol}
\begin{document}
\title{Stability Analysis of Trajectories on 
Manifolds with Applications to Observer and Controller Design}

\author{Dongjun Wu$^{1}$, Bowen Yi$^{2}$ and Anders
	Rantzer$^1$
	\thanks{*This project has received funding from the European Research
		Council (ERC) under the European Union's Horizon 2020 research
		and innovation programme under grant agreement No 834142
	(ScalableControl).}
	\thanks{$^{1}$ D. Wu and A. Rantzer are with the Department of Automatic
		Control, Lund
	University, Box 118, SE-221 00 Lund, Sweden {\tt\small
	dongjun.wu@control.lth.se}, {\tt\small anders.rantzer@control.lth.se}.}
	\thanks{$^{2}$ B. Yi is with Robotics Institute, University of
	Technology Sydney, Sydney, NSW 2006, Australia {\tt \small
bowen.yi@uts.edu.au}.}
}
\maketitle
\begin{abstract}
This paper examines the local exponential stability (LES) of trajectories for
nonlinear systems on Riemannian manifolds. We present necessary and sufficient
conditions for LES of a trajectory on a Riemannian manifold by analyzing the
complete lift of the system along the given trajectory.
These conditions are coordinate-free which
reveal fundamental relationships between exponential stability and incremental
stability in a local sense. We then apply these results to design tracking
controllers and observers for Euler-Lagrangian systems on manifolds;
a notable advantage of our design is that it visibly reveals the
effect of curvature on system dynamics and hence suggests compensation
terms in the controller and observer. Additionally, we revisit some
well-known intrinsic observer problems using our proposed method, which largely
simplifies the analysis compared to existing results.

\end{abstract}

\section{Introduction} \label{sec:intro}

Many physical systems are naturally modelled on Riemannian manifolds. The most important example may refer to a class of mechanical systems with configuration spaces being Riemannian manifolds, rather than Euclidean spaces \cite{bullo2019geometric}. Another well known example appears in quantum systems \cite{d2007introduction}, in which systems state lives on Lie groups \cite{jurdjevic1972control}.

It is well known that local stability of equilibria for systems, whose state
space is on Riemannian manifolds, can be analyzed via linearization in local
coordinate---similar to the case in Euclidean space---known as the Lyapunov
indirect method. In many practically important control tasks, we are very
interested in the stability of \emph{a particular solution} $X(\cdot)$, the problem
of which widely arises in observer design, trajectory tracking
\cite{bullo1999tracking}, orbital stabilization \cite{yi2020orbital}, and
synchronization \cite{andrieu2016transverse}. In Euclidean space, these tasks,
or equivalently the stability of a solution $X(\cdot)$, may be solved by introducing
an error variable and then studying the error dynamics, which is usually a
nonlinear time-varying system. In particular, the local exponential
stability (LES) of $X(\cdot)$ for a given nonlinear system can be characterized by
the linearization of its error dynamics near the trajectory.

A similar problem arises in contraction and incremental stability analysis
\cite{lohmiller1998contraction,forni2013differential}, in which we are
interested in the attraction of any two trajectories to each other,
rather than a particular one $X(\cdot)$. The basic idea is to explore the stability
of a linearized dynamics, regarded as first-order approximation, to obtain the
incremental stability of the given system.
Indeed, studying the stability of a \emph{particular} solution via first-order
approximation has already been used, which, from the perspective of incremental
stability, is known as partial (or virtual) contraction
\cite{wang2005partial,forni2013differential}. As discussed above, some
excitation conditions of the given trajectory may be needed to continue
stability analysis. A successful application may be found in
\cite{bonnabel2014contraction} for the stability of extended Kalman filter
(EKF).

For the system evolving on Riemannian manifolds, however, the stability analysis
of a solution $X(\cdot)$ is much more challenging. The difficulty arises from two
aspects. On one hand, the ``error dynamics'' for such a case is more
involved---there are, indeed, no generally preferred definition of tracking (or
observation, synchronization) errors---the induced Riemannian distance on
manifolds can hardly be used to derive error dynamics directly. In
practice, one has to choose an error vector according to the structure of the
manifold, see \cite{bullo1999tracking,lageman2009gradient,mahony2008nonlinear}
for examples. On the other hand, the alternative method, via first-order
approximation (or partial contraction), is non trivial to be applied to
Riemmanian manifolds, since it is usually a daunting task to calculate the
differential dynamics on Riemannian manifolds, and also some complicated
calculations of parallel transport are involved. Overcoming these two major
challenges is the main motivation of the paper.

To address this, we provide in this paper an alternative way to study LES
of trajectories on Riemannian manifolds, namely, LES will be characterized by
the stability of the \emph{complete lift} of the system along the trajectory, in
this way removing the need of obtaining error dynamics. Complete lift, or
tangent lift, has been used to study various control problems, see for example
\cite{cortes2005characterization, van2015geometric,
bullo2019geometric, Wu2021, bullo2007reduction}. Among the listed
references, the most relevant work to ours are \cite{bullo2007reduction,
Wu2021}. In \cite{bullo2007reduction} the authors have remarked that
the complete lift can be seen as a linearization procedure. However, the
verification of stability of the complete lift system is challenging since
it is a system living in the tangent bundle and thus how to effectively use the
aforementioned characterization to guide controller and observer design is
an open question. We address this question in this paper.

The main contributions of the paper are three folds.

\begin{itemize}
	\item[-] Establish the relationship between LES of a solution
		to the stability of the complete lift along this solution on
		a Riemannian manifold, which can be seen as the Lyapunov indirect
		method on manifolds. Then show that LES of a
		solution is equivalent to local contraction near the solution
		$X(\cdot)$.

	\item[-] Propose an alternative approach for analysis of LES based on the
		characterization of complete lift system. This novel approach
		obviates the calculation of complete lift and hence facilitates the
		analysis of local exponential stability and contraction. We
		demonstrate the efficiency of the proposed methods by revisiting
		some well-known research problems.

	\item[-] Two main types of application problems are studied, namely,
		controller and observer design, especially for mechanical
		systems on manifolds. These results largely simplify the
		analysis in some existing works.
		In particular, the proposed method is quite efficient for
		analyzing a class of systems called Killing systems.
\end{itemize}

{\em Notation.} Throughout this paper we use rather standard notations from
Riemannian geometry \cite{carmo1992riemannian, petersen2006riemannian}. Denote
$M$ the Riemannian manifold of dimension $n$, $\langle \cdot, \cdot\rangle$ the
metric, $\nabla$ the Levi-Civita connection, $R(X,Y)Z$ the Riemannian curvature, $\pi:TM \ra M$ the natural
projection of the tangent bundle. We use $\nabla$ and $\grad(\cdot)$
interchangeably to
represent the gradient operator. Let $\Hess(\cdot)$ be the Hessian, $\exp(\cdot)$
the exponential map, $P_x^y: T_x M \ra T_y M$ the parallel transport from $T_x
M$ to $T_y M$, $d(x,y)$ the Riemannian distance between $x$ and $y$,
and $B_c(x)=\{\xi\in M | d(\xi,x)\le c \}$ the Riemannian ball. Let
$\phi^f(t;t_0,x_0)$ be the flow of the equation $\dot{x}=f(x,t)$; and we
sometime write $\phi(\cdot)$ when clear from the context. The notation $L_f Y$
stands for Lie derivative of $Y$ along $f$. 


\section{Local Exponential Stability on Riemannian Manifolds}
\label{sec:loc-exp}

\subsection{Theory: LES and Complete Lift} \label{subsec:LES et Clift}
Consider a system 
\begin{equation} \label{sys:NL-Rie}
	\dot{x} = f(x,t) 
\end{equation} 
with the system state $x$ on the Riemannian manifold $M$, and $X(\cdot)$ a
particular solution, {\it i.e.}, $\dot{X}(t)= f(X(t),t)$ from the intial
condition $X(t_0) =X_0 \in M$. We study the local exponential
stability of the solution $X(t)$. Some definitions are recalled below.

\begin{definition}\label{def:stab of traj}\rm
	The solution $X(\cdot)$ of the system (\ref{sys:NL-Rie}) is 
	\emph{locally exponentially stable} (LES) if there exist positive
	constants $c, K$ and $\lambda$, all independent of $t_0$, such that
	\[
		d(x(t), X(t)) \le K d(x(t_0), X(t_0)) e^{-\lambda
		(t-t_0)}, \; 
	\]
	for all $t \ge t_0 \ge 0$ and  $x(t_0)$ satisfying $d(x(t_0),
	X(t_0))<c$.
\end{definition}
\begin{remark} \label{rmk:1}\rm
For the case that $X(t)$ is a trivial solution at an equilibrium, {\em i.e.}, $X(t) \equiv X_0,~\forall t\ge t_0 $, Definition \ref{def:
	CLift} coincides with the standard definition of LES of an equilibrium.  We should also notice the peculiarity of this definition---it may happen that the union of LES solutions forms into a dense set.
	For example, every solution of $\dot{x}=Ax$ is LES when $A$ is Hurwitz.
\end{remark}

We recall the definition of complete lift of a vector field, see
\cite{yano1973tangent, crampin1986applicable} for more detailed discussions.

\begin{definition}[Complete Lift] \rm
	\label{def: CLift}Consider the time-varying vector field $f(x,t)$. Given
	a point $v\in TM$, let $\sigma(t,s)$ be the integral curve of
	$f$ with $\sigma(s,s)=\pi(v)$. Let $V(t)$ be the vector field along
	$\sigma$ obtained by Lie transport of $v$ by $f$. Then $(\sigma,V)$
	defines a curve in $TM$ through $v$. For every $t\geq s$, the
	\emph{complete lift} of $f$ into $TTM$ is defined at $v$ as
	the tangent vector to the curve $(\sigma,V)$ at $t=s$. We denote this
	vector field by $\tilde{f}(v,t)$ , for $v\in TM$.
\end{definition}

\begin{definition} \label{defsys:lift}\rm
	Given the system (\ref{sys:NL-Rie}), and a solution $X(t)$. Define the
	complete lift of the system (\ref{sys:NL-Rie}) along $X(t)$ as 
	\begin{equation} \label{eqdef: clift}
		\dot{v} = \tilde{f}(v, t), \; v(t) \in T_{X(t)}M
	\end{equation}where $\tilde{f}$ is the complete lift of $f$ as in
	Definition \ref{def: CLift}.
\end{definition}

The most important property of the complete lift system is {\em linearity} at a
fixed fibre. We refer the reader to \cite{Wu2021} for coordinate expression of (\ref{eqdef: clift}). From this definition, one can easily verify that the solution to (\ref{eqdef:
clift}), {\em i.e.}, $v(t)$ has the property that $\pi v(t) = X(t)$. Hence we say that
(\ref{eqdef: clift}) defines a dynamical system along the particular solution $X(t)$.  

The following simple characterization is the theoretical basis of this
paper. It can be viewed as an analogue of the Lyapunov indirect method on
Riemannian manifolds.
\begin{theorem}\label{thm:lin}\rm
Assume the system \eqref{sys:NL-Rie} is forward complete for $t\ge 0$. If the solution $X(t)$ is LES, then the complete
	lift of the system \eqref{sys:NL-Rie}
	along $X(\cdot)$ is exponentially stable. If the solution $X(\cdot)$ is bounded, the
	converse is also true.
\end{theorem}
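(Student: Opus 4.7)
The plan is to prove the two implications separately, unified by the observation that a solution $v(t)$ of the complete lift along $X(\cdot)$ with initial condition $v(t_0)=v_0$ coincides with the variation field $\partial_\epsilon|_{\epsilon=0} x_\epsilon(t)$ of any smooth one-parameter family of solutions of \eqref{sys:NL-Rie} with $x_\epsilon(t_0)$ a curve through $X(t_0)$ tangent to $v_0$. This identification is immediate from Definition \ref{def: CLift}, since Lie transport of $v_0$ along the flow of $f$ is by construction the derivative of the flow with respect to its initial condition.

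\emph{Necessity.} Suppose $X(\cdot)$ is LES with constants $c,K,\lambda$. For unit $v_0\in T_{X(t_0)}M$, set $x_\epsilon(t_0)=\exp_{X(t_0)}(\epsilon v_0)$; forward completeness guarantees $x_\epsilon(t)$ is defined for all $t\ge t_0$, and for $\epsilon<c$ the LES estimate yields $d(x_\epsilon(t),X(t))\le K\epsilon e^{-\lambda(t-t_0)}$. The variation field $v(t)=\partial_\epsilon|_{\epsilon=0} x_\epsilon(t)$ solves the complete lift equation with $v(t_0)=v_0$, and the standard identity $\|v(t)\|=\lim_{\epsilon\to 0^+}d(x_\epsilon(t),X(t))/\epsilon$ gives $\|v(t)\|\le K e^{-\lambda(t-t_0)}$. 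Linearity of the lift in $v_0$ then delivers exponential stability.

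\emph{Sufficiency.} Suppose $X(\cdot)$ is bounded and the complete lift is exponentially stable. The lift is a linear time-varying system on the pullback bundle $X^*TM$, so standard converse Lyapunov theory provides a symmetric operator $P(t):T_{X(t)}M\to T_{X(t)}M$ satisfying $\alpha_1 I\preceq P(t)\preceq \alpha_2 I$ under which $W(v,t)=\langle v,P(t)v\rangle$ obeys $\dot W\le -\alpha_3 W$ along lift trajectories. Boundedness of $X(\cdot)$ supplies a uniform injectivity radius $r>0$ on a tubular neighborhood of its image, so $u(t):=\exp^{-1}_{X(t)}(x(t))$ is well-defined and smooth whenever $d(x(t),X(t))<r$, with $\|u(t)\|=d(x(t),X(t))$. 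The heart of the argument is the error expansion
\begin{equation*}
\dot u = \tilde f(u,t) + G(u,t), \qquad \|G(u,t)\|=\calo(\|u\|^2),
\end{equation*}
uniformly in $t$, which expresses the complete lift as the first-order approximation of the error dynamics in normal coordinates along $X(\cdot)$. Substituting into $\tilde W(x,t):=W(\exp^{-1}_{X(t)}(x),t)$ gives $\dot{\tilde W}\le -\alpha_3\|u\|^2+C\|u\|^3\le -\tfrac{\alpha_3}{2}\|u\|^2$ for $\|u\|$ sufficiently small, and the comparison lemma then yields $d(x(t),X(t))\le K'd(x(t_0),X(t_0))e^{-\lambda'(t-t_0)}$.

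The main obstacle is the residual estimate $\|G(u,t)\|=\calo(\|u\|^2)$ holding uniformly in $t$. Its justification requires unpacking Definition \ref{def: CLift} either in normal coordinates or via a parallel-transported orthonormal frame along $X(\cdot)$, Taylor expanding $f$ around $X(t)$, and invoking uniform bounds on the second-order data of $f$ and of $\exp^{-1}$ on the compact tube around the image of $X(\cdot)$. It is precisely this need for uniform control of higher-order terms that forces the boundedness hypothesis in the converse direction.
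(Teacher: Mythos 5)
Your necessity argument is essentially the paper's. The ``standard identity'' $\|v(t)\|=\lim_{\epsilon\to0^+}d(x_\epsilon(t),X(t))/\epsilon$ is precisely what the paper establishes via the triangle inequality together with Lemma \ref{lem: dist} (two curves agreeing to first order at $s=0$ stay $O(s^2)$ apart): for fixed $t$ the curves $\epsilon\mapsto x_\epsilon(t)$ and $\epsilon\mapsto\exp_{X(t)}(\epsilon v(t))$ agree only to first order, and that is exactly where the needed $O(\epsilon^2)$ control comes from. So this direction is correct and follows the same route, with the one quantitative lemma invoked implicitly rather than stated.

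For sufficiency you take a genuinely different route, and it has a gap at its center. The paper (Proposition \ref{prop:LES in W}) never writes an error equation: it takes the converse-Lyapunov function $V(t,v)$, quadratic in $v$ on the fibers over $X(t)$, extends it to nearby fibers by parallel transport, and uses smoothness to get $\dot V\le -c_5|w|^2$ for all $w\in T_{x(t)}M$ with $x(t)$ in a small tube; this yields contraction of the full system on the tube and hence LES. Your route passes instead through the normal-coordinate error $u=\exp^{-1}_{X(t)}(x)$ and the expansion $\dot u=\tilde f(u,t)+G(u,t)$ with $\|G\|=\calo(\|u\|^2)$ uniformly in $t$. That expansion is plausible on a compact tube, but it is the entire content of this direction of the theorem: it asserts that the complete lift is the intrinsic linearization of the error dynamics with a uniformly quadratic remainder, and the computation of $\dot u$ is delicate because $\frac{d}{dt}\exp^{-1}_{X(t)}(x(t))$ picks up contributions from the motion of the base point $X(t)$ as well as from $\dot x=f(x,t)$; one must verify that the base-point terms are absorbed exactly into the linear part $\tilde f(u,t)$ rather than polluting $G$ at first order. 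You flag this yourself as ``the main obstacle'' and sketch how it would go, but you do not carry it out, so as written the converse direction is incomplete. The trade-off is clear: your route, once the expansion lemma is proved, is the classical Lyapunov-indirect argument and yields the LES estimate directly; the paper's route avoids the error equation altogether and delivers the strictly stronger conclusion of local contraction on a neighborhood of the trajectory, which is the content of Proposition \ref{prop:LES in W}.
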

\begin{proof}
($\Longrightarrow$)	Assume that the solution $X(t)$ is LES. Denote the
minimizing normalized (\emph{i.e.} with unit speed) geodesic joining $X(t_0)$ to
$x(t_0)$
as $\gamma:[0, \hat{s}]\rightarrow M$, with $\gamma(0) = X(t_0), \quad \gamma(\hat s)
= x(t_0)$ and
$0\leq\hat{s}=d(X(t_0),x(t_0))$. Let
$ v_0\in TM $ with $\pi(v_0)=X(t_0)$ and
	$v_0=\gamma ^{\prime}(0)$, and $v(t)$ be the solution to the complete
	lift system (\ref{eqdef: clift}). Then
	\begin{equation}
		\hat{s}\left\vert v(t)\right\vert =d\left(
			\exp_{X(t)}\left( 
		\hat{s}v(t)\right) ,X(t) \right) ,   \label{eq:1}
	\end{equation}
	where $\exp_{x}:TM\rightarrow M$ is the exponential map, by choosing $\hat{s}$ sufficiently small such that $\exp$ is defined. Using the
	metric property of $d$, we have
\begin{align}
	&d\left(  \exp_{X(t)}\left( \hat{s}v(t)\right), X(t) \right)   \notag \\
	 \leq &d\left( \exp_{X(t)}\left( \hat{s}v(t)\right)
		,x(t)\right) +d(x(t),X(t)) \\
	 \leq &d\left( \exp_{X(t)}\left( \hat{s}v(t)\right)
		,x(t)\right) +K\hat{s}e^{-\lambda(t-t_{0})},   \label{eq:2}
\end{align}
where the second inequality follows from Definition \ref{def:stab of traj}.
Fixing $t$ at the moment and invoking (\ref {eq:1}) and (\ref{eq:2}) we get
\begin{equation}
\begin{aligned}
\left\vert v(t)\right\vert &
\leq \kappa(\hat s) +Ke^{-\lambda(t-t_{0})} \\
\kappa(\hat s) &
:= \frac{d\left(
\exp_{X(t)}\left( \hat{s}v(t)\right)
,x(t)\right) }{\hat{s}}
.   \label{eq:3}
\end{aligned}
\end{equation}
Note that more precisely $\kappa$ is a function of both $t$ and $\hat{s}$. 
But omitting the $t$ argument does not affect the following analysis.

Now we need to show the term $\kappa(\hat s)$ is of order $O(\hat{s})$. Since
$x(t_0)=\gamma(\hat{s})$, this term can be rewritten as
\begin{equation*}
\kappa(s)=\frac{d\left( \exp_{X(t)}\left( sv(t)\right)
,\phi(t;t_{0},\gamma(s))\right) }{s}
\end{equation*}
where we have replaced $\hat{s}$ by $s$. To this end, we consider two
functions $ \alpha_{1}(s)  =\exp_{X(t)}\left( sv(t)\right)$,
$\alpha_{2}(s) =\phi(t;t_{0},\gamma(s))$. Similarly, we have omitted the $t$
argument which does not affect the proof.
We have $\alpha_{1}(0)=\alpha_{2}(0)=X(t)$ and $\alpha_{1}^{\prime}(0)=
\alpha_{2}^{\prime}(0)=v(t).$ Thus
\begin{equation*}
	\kappa(s)=\frac{1}{s}d(\alpha_{1}(s),\alpha_{2}(s))=O(s),
\end{equation*}
where we have used Lemma \ref{lem: dist} given in Appendix. Now letting $\hat{s}\rightarrow0$ in (
\ref{eq:3}) and noticing that the geodesic is unit speed, we have
\[
	|v(t)| \le K|v(t_0)|e^{-\lambda (t-t_0)}, 
\]for any $v(t_0) \in T_{X(t_0)} M$. 

($\Longleftarrow$) A consequence of Proposition \ref{prop:LES in W} (see Section
\ref{subsec:contra}):
If
the complete lift along $X(\cdot)$ is ES, then the proof of Proposition
\ref{prop:LES in W} shows that the system is contractive on a bounded set $B_c$
and thus the LES of $X(\cdot)$.
\end{proof}

\begin{remark}
	Theorem \ref{thm:lin} provides a characterization for LES of trajectories
	on manifolds via complete lift. Unfortunately, the original form of this
	theoretical result lacks practical utility for applications.
	The main reason is that the
	complete lift on manifolds is difficult to obtain, and quite often, its 
	calculation of it relies on local coordinates, which is in conflict with
	the purpose (coordinate-free design)
	of this paper. To circumvent this issue, we propose an
	alternative approach in Section \ref{subsec:use} based on Theorem
	\ref{thm:lin}, which will be much more efficient to use.
	But we must emphasize that Theorem \ref{thm:lin} plays the
	fundamental role for the rest of the paper. 

\end{remark}

From Theorem \ref{thm:lin}, we can derive the following interesting corollary
which says that there no unbounded LES solution exists for \emph{autonomous} systems.
\begin{corollary}\rm
	For a time invariant system $\dot{x} = f(x)$, a LES  
	solution $X(t)$  should always be bounded and non-periodic.
\end{corollary}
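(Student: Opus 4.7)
The plan is to exploit the following key observation specific to autonomous systems: the velocity field of the trajectory itself, namely $v(t):=\dot X(t)=f(X(t))$, is a solution of the complete lift equation $\dot v = \tilde f(v)$ along $X(\cdot)$. This follows from the definition of complete lift (Definition \ref{def: CLift}): the Lie transport of $v_0 = f(X(t_0))$ along the flow of $f$ is $(\phi_{t-t_0})_* f(X(t_0)) = f(X(t))$, because $f$ is invariant under its own flow ($L_f f = 0$). Verifying this identity carefully is what I expect to be the one genuinely delicate step; everything else is a direct consequence.

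Once this is in place, I would apply the forward direction of Theorem \ref{thm:lin} (which does not require boundedness) to the solution $v(t)=\dot X(t)$ of the complete lift system, obtaining
\[
|\dot X(t)| \;\le\; K\,|\dot X(t_0)|\,e^{-\lambda(t-t_0)}
\]
for some positive constants $K,\lambda$. Thus the speed along $X(\cdot)$ decays exponentially.

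Boundedness then follows from a standard Riemannian length estimate:
\[
d(X(t),X(t_0)) \;\le\; \int_{t_0}^{t} |\dot X(s)|\,ds \;\le\; \frac{K\,|\dot X(t_0)|}{\lambda},
\]
so $X(t)$ remains inside a fixed Riemannian ball around $X(t_0)$. For non-periodicity, I argue by contradiction: if $X(t+T)=X(t)$ for some minimal $T>0$ with $X$ not constant, then $\dot X$ is $T$-periodic and not identically zero, so $|\dot X(t)|$ cannot tend to zero as $t\to\infty$, contradicting the exponential decay just established. Hence the only ``periodic'' LES solution of an autonomous system on $M$ is the trivial case of an equilibrium $X(t)\equiv X_0$, which is the content of the corollary.
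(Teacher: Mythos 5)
Your proposal is correct and follows essentially the same route as the paper: both identify $\dot X(t)=f(X(t))$ as a solution of the complete lift along $X(\cdot)$ (the paper states this in coordinates, you justify it via $L_f f=0$), apply the forward direction of Theorem~\ref{thm:lin} to get exponential decay of the speed, integrate for boundedness, and conclude non-periodicity. Your extra care in verifying that the Lie transport of $f(X(t_0))$ is $f(X(t))$ is a welcome tightening of a step the paper takes for granted, but it does not change the argument.
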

\begin{proof}
	The complete lift of $\dot{x} = f(x)$ is
	$
		\dot{v} = \frac{\p f}{\p x} v, \; v \in T_x \Rn.
	$ Clearly, $v=\dot{x}$ is a solution to the complete lift system. Then
	by Theorem \ref{thm:lin},
	$
		|\dot{X}(t)| \le k |\dot{X}(0)| e^{-\lambda t},
	$ hence $X(t)$ cannot be periodic. Further more, $
		|X(t)| \le |X(0)| + \int_0^t k |\dot{X}(0)| e^{-\lambda s} ds
		        = |X(0)| + \frac{k|\dot{X}(0)|}{\lambda} (1 -
		       e^{-\lambda t})
		        < |X(0)| + \frac{k|\dot{X}(0)|}{\lambda}.
		       $
\end{proof}

In \cite[Lemma 1]{giaccagli2020sufficient}, the authors obtain a similar result for autonomous systems, {\em i.e.}, there is a unique attractive equilibrium in an invariant set, in which the system is incrementally exponentially stable.

\subsection{Contraction and LES} \label{subsec:contra}
Contraction theory has become a powerful tool for analysis and design of control systems, see 
\cite{lohmiller1998contraction, forni2013differential, andrieu2016transverse, ruffer2013convergent,
manchester2017control, aminzare2014contraction}
and the references therein.
In Section \ref{subsec:LES et Clift}, we have studied 
LES of solutions to the system (\ref{sys:NL-Rie}). In this subsection, we will
show the close connection between the proposed result and contraction analysis
on manifolds \cite{simpson2014contraction,
Wu2021}. The reader may refer to \cite{ruffer2013convergent, angeli2002lyapunov} for the case on Euclidean space.

We say that the system (\ref{sys:NL-Rie}) is
contractive on a set $C$ if there exist positive constants $K, \lambda$,
independent of $t_0$ such that
\begin{equation}
	d(\phi(t;t_0, x_1), \phi(t;t_0, x_2)) \le K d(x_1, x_2) e^{-\lambda
	(t-t_0)}, 
\end{equation} for all $x_1, x_2 \in C, t\ge t_0 \ge 0$. 
For technical ease, we have slightly modified the definition of contraction
by allowing the set $C$ to be not forward invariant.   
Based on Theorem \ref{thm:lin}, we have the following proposition, which can be viewed as a bridge from LES to local contraction. 

\begin{proposition} \label{prop:LES in W}\rm
	A bounded solution $X(t)$ to the system (\ref{sys:NL-Rie}) is LES if and
	only if there exists a constant $c$ such that the system
	\eqref{sys:NL-Rie} is contractive on a bounded set $B_c$ whose interior
	contains $X(\cdot)$. 
\end{proposition}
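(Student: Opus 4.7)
The plan is to handle the two implications separately, with the reverse (``if'') direction being essentially immediate from the definitions and the forward (``only if'') direction relying on Theorem \ref{thm:lin} together with a tube argument. For sufficiency, assuming the system is contractive on $B_c$ with $X(\cdot)$ strictly in its interior and $X(\cdot)$ bounded, there is a uniform positive distance from $X(\cdot)$ to $\partial B_c$. I would then pick $\delta > 0$ small enough that $K\delta$ is less than this distance, so that the contraction bound forces any trajectory starting in the $\delta$-neighborhood of $X(t_0)$ to remain in $B_c$ for all future times. Applying the contraction inequality to the pair $\bigl(x(t_0),\,X(t_0)\bigr)$ then directly yields the LES estimate of Definition~\ref{def:stab of traj}.

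For necessity, the first step is to invoke Theorem \ref{thm:lin}: since $X(\cdot)$ is bounded and LES, the complete lift along $X(\cdot)$ is exponentially stable with some constants $K,\lambda$. The crucial next step is to propagate this decay rate uniformly to the complete lift along every trajectory whose initial condition lies in a sufficiently small tube around $X(\cdot)$. Since the closure of $X(\cdot)$ is compact, continuity of $f$ and its derivatives together with a classical robustness argument for exponentially stable linear time-varying systems yields, after shrinking $c$, a uniform estimate $|v(t)|\le K' |v(t_0)| e^{-\lambda' (t-t_0)}$ for every solution $v$ of the complete lift initialized above a point of $B_c$.

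Given this uniform estimate, contraction in the Riemannian distance follows from a standard variation-of-curves argument. For $x_1,x_2 \in B_c$ I would connect them by a minimizing unit-speed geodesic $\gamma:[0,\ell]\to M$ with $\ell = d(x_1,x_2)$, form the two-parameter family $\Gamma(s,t) := \phi(t;t_0,\gamma(s))$, and observe that $s \mapsto \partial_s \Gamma(s,t)$ solves the complete lift along $\phi(\cdot;t_0,\gamma(s))$ with initial value $\gamma'(s)$. The uniform bound then gives $|\partial_s \Gamma(s,t)| \le K' e^{-\lambda'(t-t_0)}$, and integrating over $s \in [0,\ell]$ recovers the contraction inequality for $d\bigl(\phi(t;t_0,x_1),\phi(t;t_0,x_2)\bigr)$. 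The main obstacle I foresee is precisely the uniform-in-tube extension of the exponential estimate, since stability of a single time-varying linear flow does not automatically transfer to nearby flows; a secondary concern is choosing $c$ small enough that minimizing geodesics between points of $B_c$ stay inside the tube where the estimate holds, which is ensured once $B_c$ lies inside a totally normal neighborhood of $X(\cdot)$.
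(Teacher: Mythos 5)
Your overall strategy coincides with the paper's: both directions reduce, via Theorem \ref{thm:lin}, to showing that the complete lift is uniformly exponentially stable not just along $X(\cdot)$ but along every trajectory started in a small tube around it, and then integrating $|\partial_s \Gamma(s,t)|$ over a minimizing geodesic to pass from the differential estimate to the distance estimate. The ``if'' direction is indeed immediate; with the paper's relaxed definition of contraction (the set $C$ need not be forward invariant) you do not even need your forward-invariance step, since applying the contraction inequality to $x_1=x(t_0)$, $x_2=X(t_0)$ already gives the LES bound.

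The genuine gap is exactly the step you flag and then leave open: the uniform-in-tube extension of the exponential decay. An unspecified appeal to ``a classical robustness argument for exponentially stable LTV systems'' does not suffice here, because the lifts along nearby trajectories act on different fibres of $TM$, so one must exhibit a concrete mechanism for comparing them. The paper supplies this via a converse Lyapunov argument: exponential stability of the lift along $X(\cdot)$ yields a $\mathcal{C}^1$ function $V(t,v)$, quadratic in $v$, with $c_1|v|^2\le V(t,v)\le c_2|v|^2$ and $\dot V(t,v)\le -c_3|v|^2$ on $T_{X(t)}M$. Vectors $w\in T_{x(t)}M$ are compared with $v\in T_{X(t)}M$ through parallel transport $P_{X(t)}^{x(t)}$, and smoothness of $V$ gives $|\dot V(t,P_{X(t)}^{x(t)}v)-\dot V(t,v)|\le c_4\, d(x(t),X(t))$. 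The LES hypothesis itself keeps $d(x(t),X(t))\le Kc$ for all $t\ge t_0$ once $x(t_0)\in B_c(X(t_0))$, so for $c$ small the decrease rate $-c_3+c_4Kc\le -c_5<0$ persists uniformly over the tube, and quadratic homogeneity of $\dot V$ in $v$ upgrades the unit-sphere bound to all of $T_{x(t)}M$. The final integration along geodesics (your variation-of-curves step) is then delegated to Theorem 2 of \cite{Wu2021}. If you replace your placeholder with this Lyapunov-based fibre-to-fibre comparison, or any equally explicit transfer mechanism, your argument becomes the paper's proof.
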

\begin{proof}
	Assume that $X(t)$ is LES. Then the complete lift system along $X(t)$
	is exponential stable (ES) by Theorem \ref{thm:lin}. By converse Lyapunov theorem, there exists a $\mathcal{C}^1$
	function $V(t,v)$, quadratic in $v$ satisfying 
	\begin{equation}\label{eq:5}
		c_1|v|^2 \le V(t,v) \le c_2 |v|^2, \ \forall v \in T_{X(t)}M 
	\end{equation}
	and 
	\begin{equation}
		\dot{V}(t,v) = \frac{\p V}{\p t}(t,v) + L_{\tilde{f}} V(t,v) \le
		-c_3 |v|^2, \ \forall v \in T_{X(t)}M,  
	\end{equation}for all $t\ge t_0 \ge 0$ and three positive constants
	$c_1,c_2,c_3$.
	
	Due to the smoothness of $V$, we have
	\[
		|\dot{V}(t,P_{X(t)}^{x(t)}v) - \dot{V}(t,v)| \le c_4
		d_{TM}(P_{X(t)}^{x(t)}v, v) = c_4 d_M(x(t),X(t)).
	\]
	Thus 
	\begin{align*}
		\sup_{\tiny{\substack{|w|=1, \\  w\in T_{x(t)}M}} } 
		\dot{V}(t,w)
		&=
		\sup_{\tiny{\substack{|v|=1, \\ v\in T_{X(t)}M}}}
		\dot{V}(t,P_{X(t)}^{x(t)}v) \\
		&= 
		\sup_{\tiny{\substack{|v|=1, \\ v\in T_{X(t)}M}}}
		 \dot{V}(t,v) + 
		\dot{V}(t,P_{X(t)}^{x(t)}v) - \dot{V}(t,v)  \\
		& \le -c_3 + c_4 d(x(t),X(t)) < -c_5 <0,
	\end{align*} 
	 for $c$ small enough such that $d(x(t),X(t))$ will be small
	enough for all $t\ge t_0$ when $x(t_0) \in B_{c}({X(t_0)})$. Since
	$\dot{V}$ is quadratic in $v$ (due to the linearity of the complete lift
	system and that $V(t,v)$ is quadratic in $v$), this implies
	\[
		\dot{V}(t,v) \le -c_5 |v|^2, \ \forall v\in T_{x(t)}M, \ t\ge t_0 
	\]
	for all $x(t_0) \in B_c({X(t_0)})$. Then the
	system \eqref{sys:NL-Rie} is contractive on $B_c:= \bigcup_{t_0 \ge 0}
	B_c(X(t_0))$ which is bounded as is $X(\cdot)$ (use Theorem 2 \cite{Wu2021}). 
	The converse is obvious, and hence the proof is completed.
\end{proof}

The following corollary is a straightforward consequence.

\begin{corollary}\rm
	Assume that the system (\ref{sys:NL-Rie}) has an equilibrium point
	$x_\star \in M$. Then $x_\star$ is LES if and only if there exists an open
	neighborhood of $x_\star$ on which the system is contractive.
\end{corollary}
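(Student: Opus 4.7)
The plan is to apply Proposition \ref{prop:LES in W} to the particular case where the solution $X(\cdot)$ is the constant curve $X(t)\equiv x_\star$. Since a single point is trivially bounded, Proposition \ref{prop:LES in W} directly applies: there is no obstacle in checking its hypothesis.

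For the forward direction, suppose $x_\star$ is LES in the sense of Definition \ref{def:stab of traj} (which, by Remark \ref{rmk:1}, coincides with the usual notion of LES of an equilibrium). Applying Proposition \ref{prop:LES in W} to the bounded solution $X(t)\equiv x_\star$ yields a constant $c>0$ such that the system \eqref{sys:NL-Rie} is contractive on a bounded set $B_c$ whose interior contains $\{x_\star\}$. Since $\bigcup_{t_0\ge 0} B_c(X(t_0)) = B_c(x_\star)$ is precisely a Riemannian ball, it is an open neighborhood of $x_\star$, which gives the desired conclusion.

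For the converse direction, suppose the system is contractive on an open neighborhood $U$ of $x_\star$. Pick $c>0$ small enough so that $B_c(x_\star)\subset U$. For any initial condition $x(t_0)\in B_c(x_\star)$, taking $x_1 = x(t_0)$ and $x_2 = x_\star$ in the definition of contraction, and noting that $\phi(t;t_0,x_\star) = x_\star$ since $x_\star$ is an equilibrium, we obtain
\[
d(x(t), x_\star) \le K\, d(x(t_0), x_\star)\, e^{-\lambda(t-t_0)},
\]
for all $t\ge t_0 \ge 0$. This is exactly the LES estimate in Definition \ref{def:stab of traj}.

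There is essentially no hard part here, as the result is framed as a straightforward specialization of Proposition \ref{prop:LES in W} to an equilibrium solution; the only minor point worth stating explicitly is that the constant curve $X(t)\equiv x_\star$ is automatically bounded, so the hypothesis of Proposition \ref{prop:LES in W} is satisfied without additional work, and that in the converse direction one uses the fact that $x_\star$ itself is an orbit of the flow in order to convert contraction into decay toward $x_\star$.
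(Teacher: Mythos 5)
Your proposal is correct and matches the paper's intent exactly: the paper gives no explicit proof, stating only that the corollary is ``a straightforward consequence'' of Proposition \ref{prop:LES in W}, and your argument is precisely that specialization to the constant (hence bounded) solution $X(t)\equiv x_\star$, with the equilibrium property used to turn contraction into the LES decay estimate in the converse direction. The only cosmetic remark is that $B_c(x_\star)$ is defined in the paper's notation as a closed ball, so one should pass to its interior to obtain the open neighborhood, which changes nothing of substance.
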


In \cite{forni2015differential}, the authors proved similar result to this corollary for \emph{autonomous} systems in Euclidean space. The paper \cite{ruffer2013convergent} focuses on asymptotic stability and asymptotic
contraction, also in Euclidean space.  
\subsection{A More Usable Form}\label{subsec:use}

As remarked earlier, Theorem \ref{thm:lin} is not suitable for practical
applications due to the difficulty of calculating the complete lift system. In this
subsection, we propose a more usable version of Theorem \ref{thm:lin} (still intrinsic)
which will make the analysis of LES a routine task.

For reasons that will be clear later, we rename the state $x$ in the system
\eqref{sys:NL-Rie} as $q$. Fig. \ref{fig:q} is drawn to illustrate our idea.
In Fig. \ref{fig:q}, the solid curve
represents a trajectory of the system system \eqref{sys:NL-Rie}, say
$q:\mathbb{R}_{\ge 0} \to M$, whose velocity vectors are drawn as black arrows,
denoted $\dot{q}$. The dashed curves are flows of the initial curve
$\gamma: s \mapsto \gamma(s) \in M$. The blue arrows emanating from 
the curve $q$ are the (transversal) velocities of the dashed curve, denoted as
$q'$, or in precise language, $q' = \frac{\partial q(s,t)}{\partial s} $ for the
parameterized curve $(s,t) \mapsto q(s,t)$. We call $q'$ a variation along
$q(\cdot)$.

\begin{figure}[ht]
	\begin{center}
		\includegraphics[scale = 0.55]{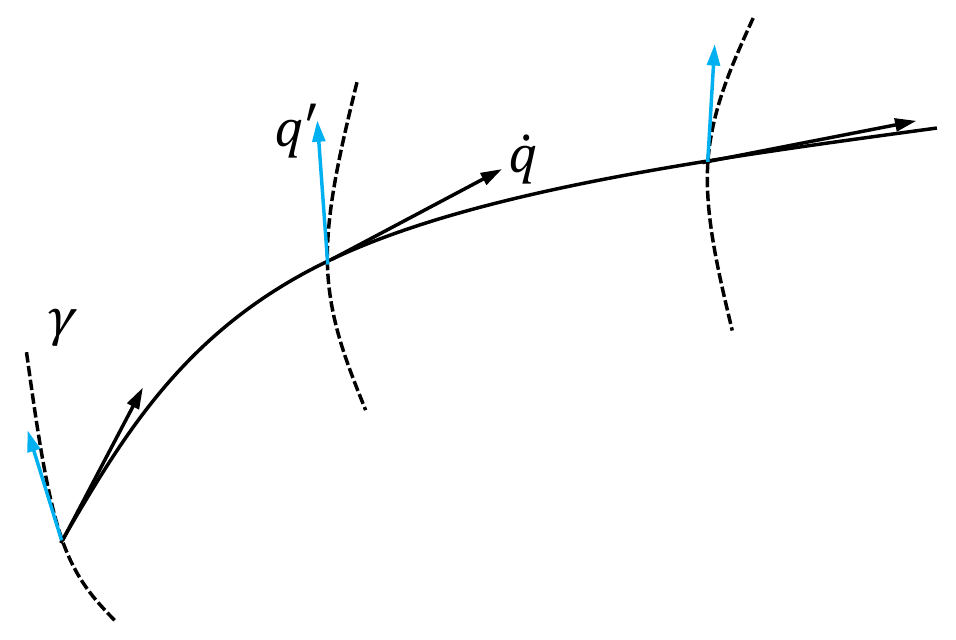}
		\caption{Illustration of $\dot{q}$ and $q'$.}
		\label{fig:q}
	\end{center}
\end{figure}

Two important observations can be made from the figure:
\begin{itemize}
	\item[-] By construction, $q'$ is the solution to the complete lift of the
		system \eqref{sys:NL-Rie} along the trajectory $q(\cdot)$.
		Thanks to this, the Lie bracket $[\dot{q},q']$ vanishes for all
		$t\ge t_0$ along $q(\cdot)$.
		\footnote{
		Recall that $[X,Y] = \left. \frac{d}{dt}\right|_{t_0}
			(\phi^X_t)^* Y(t_0)$, thus $[\dot{q},q'] = \left.
				\frac{d}{dt}\right|_{t_0} (\phi^f_t)^*
				(\phi_t^f)_* q'(t_0) = \left.  \frac{d}{dt}
					\right|_{t_0} q'(t_0) = 0$.
		} 
\item[-] The map $(s,t) \mapsto q(s,t)$ forms a parameterized surface in
		$M$. Then due to the torsion-free property of Levi-Civita
		connection, there holds $\frac{D }{dt}
		\frac{\partial q}{\partial s} = \frac{D}{ds} \frac{\partial
		q}{\partial t} $ (see \eqref{eq:swap-cov} and \cite[Lemma 3.4]{carmo1992riemannian}), 
		which implies that $\nabla_{\dot{q}} q' = \nabla_{q'}{\dot q} =
		\nabla_{q'} {f}$.

\end{itemize}

Now that $q'$ is the solution to the complete lift system, it is sufficient to
analyze the dynamics of $q'$. This may seem na\"ive at the first thought and
that the novelty seems to be only at a notational level. The fact is, however,
due to the above two observations, we now have access to rich results in
Riemannian geometry. In particular, we will see how LES on Riemannian
manifold is affected by curvature -- the most important ingredient of a
Riemannian manifold.


\subsection{Revisit of some existing results}\label{subsec:revisit}
\subsubsection{Contraction on Riemannian manifolds
\cite{simpson2014contraction}}
The following result is obtained in \cite{simpson2014contraction} (the
contraction version):
\begin{theorem}[\cite{simpson2014contraction}]\label{cor: cov}\rm
	Let $q(\cdot)$ be a solution to the system \eqref{sys:NL-Rie}, if
	\begin{equation*}
		\langle \nabla_{v} f, v \rangle \le -k  \langle
		v, v \rangle, \; \forall v \in T_{q(t)} M, \ t\ge 0,
	\end{equation*}for some positive constant $k$, then the solution $q(t)$
	is LES. 
\end{theorem}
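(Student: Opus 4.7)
The plan is to invoke Theorem \ref{thm:lin} and reduce LES of $q(\cdot)$ to exponential stability of the complete lift along $q(\cdot)$, then verify the latter by a direct energy estimate using the framework of Section \ref{subsec:use}. Concretely, let $q'$ be a variation field along $q(\cdot)$ as illustrated in Fig.~\ref{fig:q}, so that $q'(t)$ is a solution of the complete lift system \eqref{eqdef: clift}. By the two observations of Section \ref{subsec:use} (torsion-freeness of the Levi-Civita connection combined with the vanishing of $[\dot q, q']$), we have the key identity
\begin{equation*}
	\nabla_{\dot q} q' \;=\; \nabla_{q'} \dot q \;=\; \nabla_{q'} f.
\end{equation*}

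Next I would introduce the natural Lyapunov candidate $V(t) := \tfrac{1}{2}\langle q'(t), q'(t)\rangle$ on the fibre $T_{q(t)}M$. Since $\nabla$ is compatible with the metric, differentiating along $\dot q$ gives
\begin{equation*}
	\dot V \;=\; \langle \nabla_{\dot q} q', q' \rangle \;=\; \langle \nabla_{q'} f, q' \rangle \;\le\; -k\,\langle q', q'\rangle \;=\; -2k\,V,
\end{equation*}
where the inequality is exactly the hypothesis of the theorem evaluated at $v = q'(t) \in T_{q(t)}M$. Grönwall's inequality then yields $|q'(t)| \le |q'(t_0)|\,e^{-k(t-t_0)}$ for every solution $q'$ of the complete lift along $q(\cdot)$, which is precisely the exponential stability of the complete lift system \eqref{eqdef: clift}.

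Finally, I would apply Theorem \ref{thm:lin} in the ($\Longleftarrow$) direction to conclude that $q(\cdot)$ is LES. Note that the converse direction of Theorem \ref{thm:lin} requires boundedness of $X(\cdot)$; however, here we only need the local contraction conclusion, which, as seen in the proof of Proposition \ref{prop:LES in W}, follows from the Lyapunov structure we have just constructed (indeed $V$ is quadratic in $q'$ and has decay rate $-2k$, so parallel transport from $T_{q(t)}M$ to nearby fibres gives contraction on a small tube $B_c$ around $q(\cdot)$). The hardest conceptual step is the identity $\nabla_{\dot q} q' = \nabla_{q'} f$, but this is precisely what the framework of Section \ref{subsec:use} is designed to make immediate; the remainder is a one-line Grönwall argument. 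In this sense the proof is a direct illustration of why the ``usable form'' of Theorem \ref{thm:lin} reduces LES analysis on manifolds to a routine computation.
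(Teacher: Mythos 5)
Your proposal is correct and follows essentially the same route as the paper: both reduce LES to exponential decay of $\langle q',q'\rangle$ via the identity $\nabla_{\dot q}q'=\nabla_{q'}f$ (from $[\dot q,q']=0$ and torsion-freeness) and then apply the hypothesis directly. Your added remarks on the boundedness caveat in Theorem \ref{thm:lin} and the passage through Proposition \ref{prop:LES in W} merely make explicit what the paper's one-line proof leaves implicit.
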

The proof of this theorem will now simplify to a few lines:
\begin{proof}
	It suffices to show the exponential convergence of the metric $\langle q',
	q' \rangle$. Indeed,
	\begin{equation*}
		\frac{1}{2} \frac{d}{dt} \langle q', q' \rangle  
		= \left< \nabla_{\dot{q}} q',q' \right> 
		= \left< \nabla_{q'} f, q' \right> \le -k \left< q',q' \right>.
	\end{equation*} Thus $ \left<q',q'\right>$ converges exponentially.
\end{proof}

Notice that we have used the fact that $[q',\dot{q}]=0$.

\subsubsection{Intrinsic reduced observer \cite{bonnabel2010simple}}
The following lemma was among the key results in \cite{bonnabel2010simple}:
\begin{lemma}[\cite{bonnabel2010simple}] \label{lem:bonna}
	Let $M$ be a smooth Riemannian manifold. Let $P\in M$ be fixed. On the
	subspace of $M$ defined by the injectivity radius at $P$, we consider
	\begin{equation} \label{sys:bonna1}
		\dot{q} = -\frac{1}{2\lambda} \grad d(q,P)^2, \quad \lambda >0.
	\end{equation}
	If the sectional curvature is non-positive, the dynamics is a
	contraction in the sense of \cite{lohmiller1998contraction}, i.e., if
	$\delta x$ is a virtual displacement at fixed $t$, we have
	\begin{equation} \label{eq:bonna1}
		\frac{d}{dt} \left< \delta q, \delta q \right> \le -
		\frac{2}{\lambda} \left< \delta q, \delta q \right>.
	\end{equation} If the sectional curvature is upper bounded by $A>0$,
	then \eqref{eq:bonna} holds for $d(q,P)< \pi /(4 \sqrt{A})$.
\end{lemma}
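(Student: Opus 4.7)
The plan is to apply the machinery of Section~\ref{subsec:use} directly. Writing $f(q) = -\tfrac{1}{2\lambda}\grad\rho(q)$ with $\rho(q) := d(q,P)^2$, and letting $q'$ denote a variation field along a solution $q(\cdot)$ (so that $[\dot q, q'] = 0$ and $\nabla_{\dot q} q' = \nabla_{q'} f$), I compute
\begin{equation*}
  \tfrac{1}{2}\tfrac{d}{dt}\langle q', q'\rangle
  = \langle \nabla_{\dot q} q', q' \rangle
  = \langle \nabla_{q'} f, q' \rangle
  = -\tfrac{1}{2\lambda}\langle \nabla_{q'}\grad\rho,\, q'\rangle
  = -\tfrac{1}{2\lambda}\Hess\rho\,(q',q').
\end{equation*}
Thus the entire claim reduces to a lower bound on the Hessian of the squared distance function $\rho$ along the variation $q'$. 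Note that this already matches the virtual displacement $\delta q$ of the original statement, since $q'$ is precisely the solution of the complete lift, which corresponds to the differential/variational dynamics considered in \cite{lohmiller1998contraction}.

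Next I invoke the standard Hessian comparison for the squared distance function, which is where the curvature assumption enters. Set $\tilde\rho := \tfrac{1}{2}\rho$, so that $\Hess\rho = 2\Hess\tilde\rho$. Within the injectivity radius of $P$, the Hessian comparison theorem (a consequence of the Rauch/Jacobi-field comparison) yields:
\begin{itemize}
\item[(i)] if the sectional curvatures are nonpositive, then $\Hess\tilde\rho(v,v) \ge \langle v,v\rangle$ for all tangent vectors $v$ at $q$;
\item[(ii)] if the sectional curvatures are bounded above by $A>0$ and $r:=d(q,P) < \pi/(2\sqrt{A})$, then
\begin{equation*}
  \Hess\tilde\rho(v,v) \ge \bigl(\sqrt{A}\,r\bigr)\cot\!\bigl(\sqrt{A}\,r\bigr)\,\langle v,v\rangle.
\end{equation*}
\end{itemize}
Both bounds split the tangent space into the radial direction (along the unique minimizing geodesic to $P$, where the Hessian is exactly $1$) and its orthogonal complement (where Jacobi-field comparison supplies the stated bound), and I would spell this out only to the extent needed.

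Plugging (i) into the displayed identity immediately gives $\tfrac{d}{dt}\langle q',q'\rangle \le -\tfrac{2}{\lambda}\langle q',q'\rangle$, which is exactly \eqref{eq:bonna1}. For case (ii) I plug in the $\cot$-bound: one checks that at $r = \pi/(4\sqrt A)$ the factor $(\sqrt{A}\,r)\cot(\sqrt{A}\,r)$ equals $\pi/4 > 1$, and this factor is monotonically decreasing in $r$ on $(0,\pi/(2\sqrt A))$, so for all $r < \pi/(4\sqrt A)$ one gets $\Hess\tilde\rho(q',q') \ge \langle q',q'\rangle$ and hence the same contraction estimate $\tfrac{d}{dt}\langle q',q'\rangle \le -\tfrac{2}{\lambda}\langle q',q'\rangle$.

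The main obstacle is conceptual rather than computational: one must recognize that $\nabla_{q'}\grad\rho = \Hess\rho(q')$ so that the entire analysis sits at the level of the squared distance Hessian, and then import the correct curvature comparison bound. The rest is a one-line differential inequality and Gr\"onwall, together with the quantitative check of the $\pi/(4\sqrt A)$ threshold in the positive-curvature case. Notably, this proof never uses local coordinates and never computes the complete lift explicitly, illustrating the methodology advertised in Section~\ref{subsec:use}.
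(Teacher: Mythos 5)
Your reduction to the Hessian of the squared distance is exactly the paper's route (it is, verbatim, the proof of Lemma \ref{lem:re-bonna}): one computes $\frac{d}{dt}\langle q',q'\rangle = -\frac{2}{\lambda}\Hess \tilde\rho\,(q',q')$ with $\tilde\rho = \frac{1}{2}d(\cdot,P)^2$, using $[\dot q,q']=0$ and the definition \eqref{eq:Hess}, and then invokes the Hessian comparison theorem. The nonpositive-curvature part of your argument is therefore correct and coincides with the paper's.

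The positive-curvature part contains a genuine error. You claim that at $r=\pi/(4\sqrt{A})$ the factor $(\sqrt{A}\,r)\cot(\sqrt{A}\,r)$ equals $\pi/4>1$. In fact $\cot(\pi/4)=1$, so the factor equals $\pi/4\approx 0.785<1$; and since, as you yourself note, $x\cot x$ is monotonically decreasing on $(0,\pi)$ with limit $1$ as $x\to 0^+$, the factor is strictly less than $1$ for \emph{every} $r>0$. Your own monotonicity observation thus contradicts the inequality you need. The comparison bound only yields $\Hess\tilde\rho \ge (\sqrt{A}\,r)\cot(\sqrt{A}\,r)\,g$, which produces the weaker rate $\gamma(q)=\frac{2\sqrt{A}\,d(q,P)}{\lambda\tan(\sqrt{A}\,d(q,P))}<\frac{2}{\lambda}$ --- precisely the corrected statement the paper proves as Lemma \ref{lem:re-bonna}. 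The second part of Lemma \ref{lem:bonna} as stated (rate $2/\lambda$ for $d(q,P)<\pi/(4\sqrt{A})$) cannot be recovered along these lines; indeed, the remark following Lemma \ref{lem:re-bonna} argues via the Rauch comparison that it is false whenever the sectional curvature has a positive lower bound $k>0$, since then $\Hess\tilde\rho\le(1-k\tilde\rho)\,g<g$ in any punctured neighborhood of $P$, so the contraction rate is strictly below $2/\lambda$ there.
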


The proof provided in \cite{bonnabel2010simple} is a bit technical. We now give a
simplified proof using the methods developed in this paper and provide a new
estimation of the convergence rate.

\begin{lemma} \label{lem:re-bonna}
	Let $M$ be a smooth Riemannian manifold whose curvature is upper bounded
	by $A\ge 0$. Let $P\in M$ be fixed. Then the dynamics \eqref{sys:bonna1}
	is globally contractive if $A=0$, and locally
	contractive otherwise, with contraction rate $ \gamma (q) = 
	\frac{2 \sqrt{A} d(q,P)}{\lambda \tan (\sqrt{A}d(q,P))}$,\footnote{$\gamma (P)$ is
		understood as $\lim_{d(q,P)\to 0} \gamma (q) =
	\frac{2}{\lambda}$. Notice that $\gamma$ is monotone decreasing and
	strictly positive on the interval $[0,\frac{\pi}{2})$} i.e.,
	\begin{equation} \label{eq:bonna}
		\frac{d}{dt} \left< \delta q, \delta q \right> \le -
		\gamma \left< \delta q, \delta q \right>.
	\end{equation} 
\end{lemma}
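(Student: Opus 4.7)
The plan is to apply the framework developed in Section~\ref{subsec:use}: rather than manipulating the abstract virtual displacement $\delta q$ as in \cite{bonnabel2010simple}, I work with the variation $q' = \partial q/\partial s$ along a trajectory of \eqref{sys:bonna1}. By the observations in Section~\ref{subsec:use}, $q'$ satisfies $\nabla_{\dot q} q' = \nabla_{q'} f$ where $f(q) = -\frac{1}{2\lambda}\grad r^2$ and $r(q) := d(q, P)$. Hence
\begin{equation*}
  \tfrac{1}{2}\tfrac{d}{dt}\langle q', q'\rangle
  = \langle \nabla_{\dot q} q', q'\rangle
  = \langle \nabla_{q'} f, q' \rangle
  = -\tfrac{1}{2\lambda}\,\Hess(r^2)(q', q'),
\end{equation*}
using $\nabla_{q'}\grad r^2 = \Hess(r^2)(q')$ in the sense of the $(1,1)$-Hessian. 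Thus the whole problem reduces to a sharp \emph{lower bound} on $\Hess(r^2)(q',q')$.

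The key computational step is the identity
\begin{equation*}
  \Hess(r^2)(V,V) = 2\langle \grad r, V\rangle^2 + 2r\,\Hess(r)(V,V),
\end{equation*}
obtained by applying $\nabla$ to $\grad(r^2) = 2r\grad r$. Decomposing $V = V_\parallel + V_\perp$ with $V_\parallel = \langle V, \grad r\rangle\grad r$ and using that $\Hess(r)(\grad r, \cdot) = 0$, the Hessian comparison theorem (see e.g.~\cite{petersen2006riemannian}) gives, for sectional curvature bounded above by $A$ and $r < \pi/(2\sqrt{A})$ inside the injectivity radius,
\begin{equation*}
  \Hess(r)(V_\perp, V_\perp) \;\ge\; \sqrt{A}\cot(\sqrt{A}\,r)\,|V_\perp|^2.
\end{equation*}
Combining these yields $\Hess(r^2)(V,V)\ge 2|V_\parallel|^2 + 2\sqrt{A}\,r\cot(\sqrt{A}\,r)|V_\perp|^2$. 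Since the elementary inequality $\sqrt{A}\,r\cot(\sqrt{A}\,r)\le 1$ holds on $[0,\pi/(2\sqrt{A}))$, the parallel coefficient dominates the perpendicular one, so
\begin{equation*}
  \Hess(r^2)(V,V) \;\ge\; 2\sqrt{A}\,r\cot(\sqrt{A}\,r)\,|V|^2.
\end{equation*}
The limit case $A=0$ is handled in the same way with $\Hess(r)(V_\perp,V_\perp)\ge |V_\perp|^2/r$, giving the uniform bound $\Hess(r^2)(V,V)\ge 2|V|^2$, valid wherever $r^2$ is smooth; under the standing assumption that we remain inside the injectivity radius (and globally for Hadamard manifolds) this yields the global contraction rate $2/\lambda$.

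Plugging the lower bound back into the expression for $\frac{d}{dt}\langle q',q'\rangle$ gives
\begin{equation*}
  \tfrac{d}{dt}\langle q',q'\rangle \;\le\; -\tfrac{2\sqrt{A}\,d(q,P)}{\lambda\tan(\sqrt{A}\,d(q,P))}\,\langle q',q'\rangle = -\gamma(q)\langle q',q'\rangle,
\end{equation*}
which is exactly \eqref{eq:bonna}. The main obstacle is the careful application of the Hessian comparison theorem and the parallel/perpendicular bookkeeping; the rest is a one-line computation enabled by the variational framework of Section~\ref{subsec:use}. Note that the resulting admissible region $d(q,P) < \pi/(2\sqrt{A})$ is strictly larger than the $\pi/(4\sqrt{A})$ region reported in \cite{bonnabel2010simple}, and the rate $\gamma(q)$ here is an explicit improvement over the constant rate $2/\lambda$ stated there.
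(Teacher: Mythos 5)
Your proposal is correct and follows essentially the same route as the paper's proof: reduce $\frac{d}{dt}\langle q',q'\rangle$ to $-\frac{1}{\lambda}\Hess(d^2)(q',q')$ via the variational identity $\nabla_{\dot q}q'=\nabla_{q'}f$, then invoke the Hessian comparison bound $\Hess(\tfrac12 d^2)\ge \sqrt{A}\,d\cot(\sqrt{A}\,d)\,\mathrm{Id}$. The only difference is that the paper cites this comparison directly (Jost, Theorem 6.6.1) while you derive it from the first-order Hessian comparison for $r$ via the parallel/perpendicular decomposition and the inequality $\sqrt{A}\,r\cot(\sqrt{A}\,r)\le 1$; this is a correct filling-in of the cited step rather than a different argument.
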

\begin{proof}
	Let $F(q) = \frac{1}{2} d(q,P)^2$ and we estimate
	\begin{align*}
		\frac{d}{dt} \left< q', q' \right> 
		& = 2 \left< - \frac{1}{\lambda} \nabla_{q'} \nabla F, q' \right> \\
		& = -\frac{2}{\lambda} \Hess F (q',q')
	\end{align*}where the last equality follows from the definition of
	the Hessian operator, see \eqref{eq:Hess}.
	The conclusion follows invoking comparison for the Hessian of square distance (e.g.,
	\cite[Theorem 6.6.1]{Jost2017}): 
	\begin{equation*}
		\Hess F \ge \sqrt{A} d(q,P) \cot (\sqrt{A} d(q, 
		 P)) \text{Id} 
	\end{equation*}for all $q\in \text{inj} (P)$ if $A > 0$ and for all $q
	\in M$ if $A=0$.
\end{proof}

\begin{remark}
The second part of the Lemma \ref{lem:bonna} \cite{bonnabel2010simple} seems incorrect: by
Rauch comparison (see \cite[Theorem 6.4.3]{petersen2006riemannian}), for
manifold with sectional curvature lower bounded by $k>0$, there holds
$\Hess F \le (1- k F ) g$, where $g$ is the
Riemannian metric. Therefore, the contraction rate is strictly less than
$\frac{2}{\lambda}$ in any neighborhood of $P$.
\end{remark}

\begin{remark}
	Since $\Hess F |_P = g$, if the Hessian is continuous at $P$, then 
	the dynamics \eqref{sys:bonna1} is always locally contractive without
	assumptions on curvature, which also implies that $P$ is an LES
	equilibrium.
\end{remark}

The above method is not limited to study contraction of distance, in fact, it
can be easily adapted to study $k$-contraction \cite{WCS2022} (Hausdorff measure such as area and
volume) on Riemannian manifolds. As an example, let us consider the contraction
of volume.
Suppose that $\{ q'_1 ,\cdots, q'_n \}$ forms a frame at $q$ and denote $ \vol
(q_1',\cdots, q'_n)$ the signed volume of the parallelepiped spanned by this
frame and we study the change of the volume under the dynamics \eqref{eq:bonna}:
\begin{equation}
\begin{aligned} 
	& \frac{d}{dt} \vol (q'_{1}, q'_{2}, \cdots, q'_{n}) \\
	= & -(\dvg \nabla F / \lambda) \vol (q'_{1}, q'_{2}, \cdots, q'_{n}) \\
	= & - \frac{\Delta F}{\lambda}  \vol (q'_{1}, q'_{2}, \cdots, q'_{n})
\end{aligned}
\end{equation}where $\Delta $ is the Laplace-Beltrami operator \cite{Jost2017}.
Now $\Delta F = \tr (G^{-1} \Hess F)$, with $G$ the Riemannian metric, we
can conclude that the condition in Lemma \ref{lem:re-bonna} implies exponential
contraction of volume (on
non-positive curvature manifold). Since $\Delta F$ is controlled by
$\tr (G^{-1} \Hess F)$, non-positive curvature assumption is too restrictive. 
In fact, $\Delta F  =  1 + H(q,P) d(q,P) $, where $H(q,P)$ is the mean
curvature, thus the same conclusion can be drawn for manifold with non-positive
mean curvature.

\begin{remark}
	From the proof of Lemma \ref{lem:re-bonna} we see that the function $F$
	need not be the square distance. It can be replaced by any function
	whose Hessian has the required property, as the next example shows.
\end{remark}

\subsubsection{Filtering on ${\it SO}(3)$}
Consider first the attitude control problem
\begin{equation} \label{sys:so(3)}
		\dot{R} = R u
	\end{equation} where $R \in SO(3)$ and the control input $u \in
	\mathfrak{so}(3)$. The control objective is to exponentially stabilize a
	solution $R_*(t) \in SO(3)$, which verifies
	$\dot{R_*}(t)=R_*(t)\Omega(t)$, where 
	$\Omega(t)$ is some known signal. 
	The Lie group $SO(3)$ is a Riemannian manifold
	with the bi-invariant metric $\langle X, Y \rangle = \tr (X^\top Y)$.
	Due to the bi-invariance of the metric, the Levi-Civita connection is
	simply $\nabla_X Y = \frac{1}{2} [X,Y]$, see \eqref{eq:Lie-Levi}.
	Consider the function 
	\[
		F(R,R_*) = \frac{1}{2} ||R - R_*||^2,
	\] where $||\cdot ||$ is the Frobenius norm ($F$ is not the square
	distance). The gradient and Hessian
	of $F$ can be calculated as $\nabla F = \frac{1}{2} R (R_*^\top R -
	R^\top R_*)$, $\Hess F (RY,RZ) = \frac{1}{4} \tr (Z^{\top} Y R^{\top}_*
	R )$, with $X,Y \in \mathfrak{so}(3)$ respectively. 
	Clearly, $R_*(\cdot)$ is the solution to 
	\begin{align*}
		\dot{R} & = -k \nabla F(R,R_*) + R\Omega(t) \\ 
			&= -\frac{k}{2} R(R_*^\top(t)R - R^\top R_*(t)) +
			R\Omega(t).
	\end{align*}
	Let us check the LES of the $R_*(\cdot)$. For $T_R SO(3) \ni R' = R X$
	for some $X \in \mathfrak{so}(3)$, we calculate 
	\begin{align*}
		\frac{1}{2} \frac{d}{dt} \left< R', R'\right> 
		= & -k \Hess F (R',R') + \left< \nabla_{R'} (R\Omega (t)), R'
		\right>  \\
		= & -k \Hess F (R',R') + \frac{1}{2} \left< [ R', R\Omega (t)], R' \right>
		\\
		= & -k \Hess F (R',R') + \frac{1}{2} \tr \{ (X^{\top} X - X X^{\top} ) \Omega \}
		\\
		= & -k \Hess F(R', R')
	\end{align*}since $X^{\top} X - X X^{\top}$ is symmetric.
	Note that the Hessian of $F$ is positive definite at $R=R_*$.
	Hence the controller 
	\[
		u = -\frac{k}{2} (R_*^\top(t)R - R^\top R_*(t)) + \Omega(t).
	\]renders the trajectory $R_*(\cdot)$ LES as expected.

	The extension to the design of a low pass filter becomes straightforward: the
	following dynamics
	\begin{equation}
		\dot{\hat{R}} = -\frac{k}{2} \hat{R}(R^\top\hat{R}-R^\top\hat{R}) +
		\hat{R}\Omega
	\end{equation} is a locally exponential observer (filter) for
	$\dot{R}=R\Omega$. This result has been obtained in
	\cite{lageman2009gradient}, see also \cite{mahony2008nonlinear}.

\subsection{Killing system} \label{subsubsec:Kill}
\subsubsection{Low-pass filter for Killing system}
Consider a system defined by a time-varying Killing field \cite[Chapter
8]{petersen2006riemannian} on a Riemannian manifold $(M,g)$:
\begin{equation} \label{sys:Kill}
	\dot{q}=f(t,q)
\end{equation}i.e., $L_f g  =0$, see also \eqref{eq:Killing} in Appendix. We
call such system a Killing system. When
the system \eqref{sys:Kill} is perturbed by some noise, it is tempting to design
a low pass filter to reconstruct the system state from the corrupted data $q$. For
that, we propose the simple filter
\begin{equation} \label{sys:filter-Kill}
	\dot{\hat{q}} = f(t,\hat{q}) - k \nabla F(\hat{q}, q),
\end{equation}where $F(q,p)=\frac{1}{2} d(q,p)^{2}$ and $k$ is a positive
constant. To verify the convergence of this filter, we calculate as before
\begin{align*}
	\frac{1}{2} \frac{d}{dt} \left< \hat{q}',\hat{q}' \right>  
	& = \left< \nabla_{\hat{q}'} (f-k\nabla F), \hat{q}' \right> \\
	& = \left< \nabla_{\hat{q}'} f, \hat{q}' \right> - k \left<
	\nabla_{\hat{q}'} \nabla F, \hat{q}'\right> \\
	& = - k \left< \nabla_{\hat{q}'} \nabla F, \hat{q}'\right>  \text{ (by
	\eqref{eq:Killing}) }\\
	& = - k \Hess F (\hat{q}', \hat{q}').
\end{align*} Since $\Hess$ is locally positive definite, the filter converges at
least locally. If in addition, the manifold has non-positive curvature, then the
convergence is global recalling that $\Hess F \ge g$ for manifold with
non-positive curvature. This is the case for the manifold of symmetric positive
definite matrices: ${\rm SPD} := \{ P \in \mathbb{R}^{n\times n}: P = P^{\top}
>0 \}$ equipped with the metric $ \left< X, Y \right> := \tr (X P^{-1} Y P^{-1})$
for $X,Y \in T_{P} {\rm SPD}$, see e.g., \cite{Criscitiello2022}.

\subsubsection{Discretization}
Let us now consider the discretization of this filter. First, notice that $f$ is
Killing, thus the discretization of \eqref{sys:Kill} may be written as $x_{k+1} =
\tau \cdot x_{k}$ for some isometric action $\tau \in \text{Iso}(M)$ ($\tau$ is
time-invariant since \eqref{sys:Kill} is autonomous). This model has been
considered in for example \cite{Tyagi2008}, where the manifold is the set of symmetric
positive definite matrices and it is called a ``linear system'' on Riemannian
manifolds.
Next, viewing $-k \nabla F
(\hat{q},q)$ as disturbance, we may discretize \eqref{sys:filter-Kill} as 
\[
	\hat{q}_{k+1} = \exp_{\tau \cdot \hat{q}_{k}} (k \Delta t
	\log_{\hat{q}_k} q_{k})
\] where $\Delta t$ is the sampling time and we use the standard notation
$\log_{x}y = \exp_x^{-1} y$, see e.g. \cite{Pennec2006}. 
	Indeed, fix $q$ and let $r(x) = d(x,q)$, then $ \nabla r(x) =
	-\frac{\exp_x^{-1} q}{||\exp_x^{-1} q||}$, see \cite[Lemma
	5.1.3]{Jost2017}. Thus $\nabla F = -\log_{\hat{q}} q $. For example, in
	Euclidean space $-\log_{\hat{q}}q = \hat{q}-q$, which is in accordance with
	$\nabla_{\hat{q}} \frac{1}{2} ||\hat{q} - q||^{2}$.
Let $e_{k} =
\log_{\hat{q}_k} q_k$ be the error, then 
\begin{align*}
	e_{k+1} & =\log_{\hat{q}_{k+1}} q_{k+1}  \\
	& = \log_{\exp_{\tau \cdot \hat{q}_{k}} (  k \Delta t \log_{\hat{q}_k}q_k)}
	\tau \cdot q_k \\
	& \approx \log_{\tau \cdot \hat{q}_k} \tau \cdot q_k  - k \Delta t
	\log_{\hat{q}_k}q_k \\
	& = D\tau \cdot \log_{\hat{q}_k} q_k - k \Delta t \log_{\hat{q}_k}q_k  \\
	& = (D \tau - k \Delta t \, {\rm Id}) e_k
\end{align*}By assumption, $D\tau$ is a linear isometric mapping (unitary),
therefore, $e_{k}$ tends to zero exponentially for all sufficiently small sampling time.

\subsubsection{Revisit of filter on ${\it SO}(3)$}
For a Lie group $G$ equipped with a left- (resp. right-) invariant metric $g$, it is known that
any right- (resp. left-) invariant vector fields are Killing fields, see for
example \cite[Chapter 8]{petersen2006riemannian}. Indeed, equip $G$ with a
right-invariant metric and consider 
a left-invariant vector field $V(x) := dL_{x} (v)$ for $v \in \mathfrak{g}$ and $x\in
G$, whose flow reads $F^{t}(x) = R_{\exp (tv)} x$; here $L$ and $ R$ represent
left and right action respectively. Thus $DF^{t} = dR_{\exp(tv)}$. Hence 
for any right-invariant vector fields
$W_1(x) = dR_x(w_1)  , W_2(x)=dR_x(w_2)$, for $w_1, w_2 \in \mathfrak{g}$, we
have 
\begin{align*}
	\left< DF^{t} (W_1), DF^{t} (W_2) \right> 
	& = \left< dR_{\exp (tv)} (W_1), dR_{\exp (tv)} (W_2) \right> \\
	& = \left< W_1, W_2 \right>_x \text{ (right-invariant metric)}\\
	& = \left<w_1, w_2\right>,
\end{align*}which is constant for all $t\ge 0$. Summarizing, a system
defined by left-invariant vector field (right-invariant is similar) e.g.,
\begin{equation}
	\dot{x} = dL_x (v(t)), \quad x \in G, \; v(t) \in \mathfrak{g}, \, \forall
	t\ge 0
\end{equation} is a Killing system when the underling metric of $G$ is
right-invariant. Therefore, a low-pass filter can be designed using
formula \eqref{sys:filter-Kill}. In particular, for the system \eqref{sys:so(3)} on
$SO(3)$, when $u$ is known, it reads $\dot{\hat{R}} = \hat{R}u + k
\log_{\hat{R}} R = \hat{
R} u + k \log {\hat{R}^{\top} R}$ when $SO(3)$ is equipped with the
standard bi-invariant metric. Thus we obtain another low-pass filter for
left-invariant dynamics on $SO(3)$.

\section{Application to Euler-Lagrangian Systems}\label{sec:Lag}
In this section, we utilize the proposed methods to study LES of trajectories of 
Euler-Lagrangian (EL) systems.
As pointed out in Section \ref{sec:intro}, trajectory tracking and observer design are
two typical important applications which involve the analysis of LES of
trajectories. Compared to the analysis of stability of equilibrium, these tasks
are generally much harder on manifolds. Most existing results
rely on calculations in local coordinates, which is usually a daunting task. We
demonstrate in this section that, the proposed approach can be
efficiently used to design and analyze controllers and observers for mechanical
systems, while obviating the calculation in local coordinates.


There are two pervasive approaches -- Lagrangian
and Hamiltonian -- for modelling of mechanical systems \cite{Abraham2008}. These two different
approaches have led to different design paradigms. Amongst the vast literature,
we mention two books that include some of the most important results in the two
fields: the book of R. Ortega {\it et. al.} \cite{Ortega2013} (Lagrangian
approach) and the book of van der Schaft
\cite{van2017l2} (Hamiltonian approach).

In this paper, we focus on the Lagrangian approach. Since we will work on
manifolds (the configuration space is a manifold rather than Euclidean), we
adopt the geometric modelling which is well documented in
\cite{bullo2019geometric}. Briefly speaking, one starts with a
configuration space $Q$ and then calculates the kinetic energy $\left< v_q, v_q
\right> $ and potential energy $V(q)$. The kinetic energy thus defines a
Riemannian metric on the configuration space, which depends only on the inertial of
system. 
Using principles of classical mechanics
(e.g., d'Alembert principle), one can derive the following so called
Euler-Lagrangian (EL) equation:
\begin{equation} \label{eq:EL}
	\nabla_{\dot{q}} \dot{q} = -\grad V(q) +\sum_{i=1}^{m} u_i B_{i}(q)
\end{equation}where $\dot{q}$ is the velocity, $\nabla$ is the Levi-Civita connection associated with the
metric, $B_i$ are some tangent vectors, and $u_i$ are external forces
(viewed as input in our setting) taking values in $\mathbb{R}$.

\subsection{Tracking controllers for EL systems}
Suppose that $(q_*(\cdot), \dot{q}_*(\cdot), u_*(\cdot))$ is a feasible pair of the system
\eqref{eq:EL}, i.e., $\nabla_{\dot{q}_*}\dot{q}_* = -\nabla V(q_*) +
\sum_{i=1}^{m} u_{i*}(t) B_{i}(q_*)$. The objective is to design a controller
$u(\cdot)$ such that $(q_*(\cdot), \dot{q}_*(\cdot))$ is LES. 
Before moving on however, we must stop for a moment to clarify
the statement that ``$(q_*(\cdot), \dot{q}_*(\cdot))$ is LES''. Unlike $q(\cdot)$, the curve
$(q_*(\cdot), \dot{q}_*(\cdot))$ lives in the tangent bundle $TM$, which is not
equipped with a distance function {\it a priori}. Thus in order to talk about convergence,
a topology should be defined on $TM$. This is achieved via
the so-called Sasaki metric \cite{yano1973tangent}. Due to the importance of
this metric, we briefly recall its construction in the following.

Let $V,W \in TTM$ be two tangent vectors at $(p,v)\in TM$ and 
\[
	\alpha: t \mapsto (p(t), v(t)), \, \beta: t \mapsto (q(t), w(t)),
\] are two curves in $TM$ with $p(0)= q(0) = p$, $v(0)= w(0) = v$, $\alpha'(0)=V$, $\beta'(0) = W$.
Define the inner product on $TM$ by 
\begin{equation}\label{sasaki}
	\left< V, W \right>_{\rm s} := 
	\left< p'(0), q'(0) \right> + \left< v'(0), w'(0) \right>
\end{equation} in which we write $v'(0) = \left. \frac{Dv(t)}{dt}\right|_{0}$.
The Sasaki metric is well-known to be a {\it bona fide} Riemannian metric on
$TM$. For details, see \cite{yano1973tangent}.

For a curve $w(s) =
(c(s), v(s))$ lying in $TM$, we can calculate its length under the Sasaki metric
as:
\begin{align*}
	\ell (w)  
	= & \int \sqrt{\langle w'(s), w'(s) \rangle_{\rm s}} ds \\
	= & \int \sqrt{\langle c'(s), c'(s) \rangle 
		+ \langle v'(s), v'(s) \rangle} ds 
\end{align*}in which $v'(s)$ is understood as the covariant derivative of
$v(\cdot)$ along $c(\cdot)$.

\begin{assumption}\rm
In the sequel we assume that for each pair of points $(q,v)$
and $(p,w)$ in $TM$, the minimizing geodesic that joins $(q,v)$ to $(p,w)$
always exists. 
\end{assumption}

Now that the EL equation \eqref{eq:EL} defines a system on $TM$, it
seems that to analyze LES of solutions of the EL equation, one has to consider
variation (see Section \ref{subsec:use}) of the form $(q',v')$, with $v' \in
TTM$. The next theorem shows that this is not needed.

\begin{theorem} \label{thm:lag-simp}
	Consider a dynamical system on a Riemannian manifold $(M,g)$:
	\begin{equation} \label{sys:nabla}
		\nabla_{\dot{q}} \dot{q} = f(q,\dot{q}) 
	\end{equation}
	where $f$ is smooth. Let $(q(\cdot), \dot{q}(\cdot))$ be a
	trajectory of the system and $q'$ any variation along $q(\cdot)$.
	Then the system \eqref{sys:nabla} is contractive if the following system
	\begin{equation} \label{sys:sasaki}
		\frac{D}{dt} 
		\begin{bmatrix} q' \\ \frac{Dq'}{dt} \end{bmatrix} 
		= F\left(
			\begin{bmatrix} q' \\ \frac{Dq'}{dt} \end{bmatrix} \right)
	\end{equation}is exponentially stable along any $q(\cdot)$.
\end{theorem}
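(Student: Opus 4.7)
My plan is to reinterpret the second-order system \eqref{sys:nabla} as a first-order dynamical system on the tangent bundle $TM$ equipped with the Sasaki metric, and then apply Theorem \ref{thm:lin} (equivalently Proposition \ref{prop:LES in W}) with $(TM,g_{\rm s})$ playing the role of the ambient Riemannian manifold. The equation $\nabla_{\dot q}\dot q=f(q,\dot q)$ defines a vector field on $TM$ whose integral curves $(q(t),\dot q(t))$ project to solutions of \eqref{sys:nabla}, so ``contraction of \eqref{sys:nabla}'' is naturally interpreted as contraction on $TM$ under $d_{\rm s}$.

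Next I would identify the complete lift. Consider a one-parameter family $(s,t)\mapsto (q(s,t),\dot q(s,t))$ of trajectories. Its $s$-derivative at $s=0$ is a tangent vector to $TM$ with horizontal part $q'=\partial_s q|_{s=0}$, a vector along $q(\cdot)$ in $M$, and vertical part $V=\frac{D}{ds}\dot q|_{s=0}$. Invoking the key symmetry observation from Section \ref{subsec:use}, torsion-freeness together with $[\dot q,q']=0$ gives
\[
V=\nabla_{q'}\dot q=\nabla_{\dot q}q'=\frac{Dq'}{dt}.
\]
Hence the lifted variation is precisely the pair $(q',Dq'/dt)$ and its squared Sasaki norm is $|q'|^2+|Dq'/dt|^2$, matching the two components on the left of \eqref{sys:sasaki}.

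Now I would compute the evolution of this pair. The covariant derivative of the first component along $q(\cdot)$ is just the second. For the second, differentiating once more and using the Ricci identity together with $[\dot q,q']=0$,
\[
\frac{D}{dt}\frac{Dq'}{dt}=\nabla_{\dot q}\nabla_{q'}\dot q=\nabla_{q'}\nabla_{\dot q}\dot q+R(\dot q,q')\dot q=\nabla_{q'}f+R(\dot q,q')\dot q,
\]
which depends only on $q'$, $Dq'/dt$ and the base point along $q(\cdot)$. This identifies a bundle vector field $F$ that coincides with the RHS of \eqref{sys:sasaki}. Thus \eqref{sys:sasaki} is precisely the complete-lift system of the first-order $TM$-dynamics along $(q(\cdot),\dot q(\cdot))$. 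By the hypothesis this lift is ES for every trajectory, so applying Proposition \ref{prop:LES in W} on $(TM,g_{\rm s})$ produces a quadratic Sasaki-Lyapunov function whose derivative is uniformly negative on a tube around each trajectory, which yields contraction in the Sasaki distance.

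The main obstacle I anticipate is twofold: first, verifying rigorously that the $(q',Dq'/dt)$ representation faithfully captures the complete lift in $T(TM)$ under the horizontal-vertical splitting intrinsic to the Sasaki metric, rather than merely providing a convenient parametrization; second, arguing that the assumed ES ``along any $q(\cdot)$'' delivers the uniformity (in the trajectory) needed so that the radius of the tube of contraction produced by Proposition \ref{prop:LES in W} does not shrink to zero as the base trajectory varies.
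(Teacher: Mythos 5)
Your identification of the lifted variation is correct and consistent with the paper's setup: the $s$-derivative of a family of solutions $(q(s,t),\dot q(s,t))$ has horizontal part $q'$ and vertical part $\frac{D}{ds}\p_t q=\frac{D}{dt}q'$ by torsion-freeness, so its Sasaki norm is exactly $|q'|^2+|Dq'/dt|^2$, and your Jacobi-type computation of the evolution is the same one the paper carries out later in \eqref{eq:nablaEL}. The gap is in the concluding step. Proposition \ref{prop:LES in W} (and the converse direction of Theorem \ref{thm:lin}) applies to a \emph{bounded} solution and delivers contraction only on a small tube $B_c$ around that one solution, with $c$ coming out of a converse-Lyapunov perturbation argument; as you yourself note, nothing guarantees that $c$ is uniform over the base trajectory, and without that uniformity you cannot compare two solutions whose initial conditions are not already close. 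Since the theorem asserts contraction on the whole region covered by the standing Assumption (minimizing Sasaki geodesics between any two points of $TM$), this step cannot be left open, and the converse-Lyapunov route does not obviously close it.

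The paper avoids the issue with a more elementary device that your proposal does not contain: take the minimizing Sasaki geodesic $\gamma(s)$, $s\in[0,1]$, joining the two initial conditions $(q_0,v_0)$ and $(q_1,v_1)$, flow it forward to get the surface $q(s,t)$, and bound $d_{TM}(\eta_0(t),\eta_1(t))$ by the length of the curve $s\mapsto\bigl(q(s,t),\p_t q(s,t)\bigr)$, namely
\begin{equation*}
\int_0^1\sqrt{\Bigl|\frac{\p q}{\p s}\Bigr|^2+\Bigl|\frac{D}{dt}\frac{\p q}{\p s}\Bigr|^2}\,ds
\end{equation*}
after swapping covariant derivatives. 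The hypothesis that \eqref{sys:sasaki} is exponentially stable along \emph{every} trajectory (with, implicitly, uniform constants) makes the integrand decay exponentially for each fixed $s$, and at $t=0$ the integral equals $d_{TM}((q_0,v_0),(q_1,v_1))$ because $\gamma$ is minimizing. This yields contraction directly, with no converse Lyapunov function, no boundedness assumption, and no tube radius to control. To repair your argument, either prove the required uniformity of $c$ (which essentially forces you to redo this integral estimate) or replace the appeal to Proposition \ref{prop:LES in W} by the length estimate above.
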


\begin{remark}
Notice that $(q', Dq'/dt) \in T_q M \times T_q M $, thus exponential stability can be
defined in the obvious way for the system \eqref{sys:sasaki} using Sasaki metric.
\end{remark}

\begin{proof}
	Given a point $(q_1,v_1)\in TM$, and the integral
	curve $\eta_1(t) = ( q_1(t),
	\dot{q}_1(t))$ of the system \eqref{sys:nabla} passing through it at
	time $t=0$. Let $\eta_0(t) =
	(q(t), \dot{q}(t))$ be another integral curve with initial condition
	$(q_0, v_0)$. By
	assumption, there exists a minimizing geodesic
	$\gamma(s)=(q(s),v(s)), \ s\in[0,1]$ joining $(q_0, v_0)$ to
	$(q_1,v_1)$, that is, $\gamma(0)=(q_0,v_0),\ \gamma(1)=(q_1,v_1)$. Let
	$q(s,t)$ be the solution to the system \eqref{sys:nabla} with initial
	condition $\gamma(s)$, then the parameterized curve $s\mapsto
	(q(s,t),\frac{\p q(s,t)}{\p t})$ forms a
	variation between the curves $\eta_0(\cdot)$ and $\eta_1(\cdot)$. 
	Therefore, the following estimation of the distance between the two points
	$\eta_0(t)$ and $\eta_1(t)$ is obvious:
	\begin{equation}
		\begin{aligned}
		 d_{TM}(\eta_0(t), \eta_1(t)) 
		& \le  \int_0^1 \sqrt{\left|\frac{\p q}{\p s}(s,t)\right|^2 
			+ \left|\frac{D}{ds} \frac{\p q}{\p t} 
			\right|^2} ds \\
		& = \int_0^1 \sqrt{\left|\frac{\p q}{\p s}(s,t)\right|^2 
		+ \left|\frac{D}{dt} \frac{\p q}{\p s} \right|^{2} }ds
		\end{aligned}
	\end{equation} The conclusion follows immediately after replacing $\frac{\partial
	q}{\partial s}$ by $q'$.
\end{proof}

As we have remarked earlier, due to Theorem \ref{thm:lag-simp}, the analysis of
LES and contraction does not require variation of the form $(q',v')$ and that $q'$
alone is sufficient. This observation is crucial for the rest of this section.

With the preceding preparations, we are now in a position to study tracking
controller for the EL system. We focus on fully-actuated system:
\begin{equation} \label{eq:EL-full}
	\nabla_{\dot{q}} \dot{q} = -\grad V(q) +u 
\end{equation}
and assume $(q_*(\cdot),
\dot{q}_*(\cdot), u_*(\cdot) \equiv 0 )$ is a bounded feasible solution to the EL
equation, i.e., $\nabla_{\dot{q}_*} \dot{q}_{*} = - \nabla V(q_{*})$ (solution
with non-zero $u_*$ is similar). We propose
a controller with structure $u = u_{P} + u_{V} +u_{R}$ to locally
exponentially stabilize $(q_*(\cdot), \dot{q}_*(\cdot)$, where
\begin{equation} \label{ctrlu}
	\begin{aligned}
		& u_{P}(q) = - k_2 \nabla F(q,q_{*}), \\ 
		& u_{D}(q,\dot{q}) = - k_1 (\dot{q} - P_{q_*}^{q} \dot{q}_{*}), \\
		& u_{R}(q,\dot{q}_*) = R(\dot{q}, \nabla F(q,q_*)) \dot{q}
	\end{aligned}	
\end{equation}As before, $F$ is half of the square distance function.
$k_1$ and $k_2$ are constants to be determined and
$P_{q}^{p}$ is the parallel transport from $q$ to $p$, $R(\cdot,
\cdot)\cdot$ is the curvature tensor. Heuristically, this can be seen as a
PD-controller \cite{Ortega2013}, with a curvature compensation term.
By construction, $(q_*(\cdot),
\dot{q}_*(\cdot))$ is a solution to the closed loop system since $u(q_*,\dot{q}_*)
\equiv 0$. Hence it remains to show the LES of this solution. 

Thanks to Theorem \ref{thm:lag-simp} and Proposition \ref{prop:LES in W}, we
need only check the exponential stability of the system \eqref{sys:sasaki}
along $q_*(\cdot)$. For this we calculate
\begin{equation} \label{eq:nablaEL}
\begin{aligned}
	\nabla_{q'} \nabla_{\dot{q}}\dot{q} 
	& = \nabla_{\dot{q}} \nabla_{q'} \dot{q} + R(\dot{q}, q')\dot{q} \\
	& = \nabla_{\dot{q}} \nabla_{\dot{q}} q' + R(\dot{q}, q')\dot{q}  \\
	& = \frac{D^{2} q'}{dt^{2}} + R(\dot{q}, q')\dot{q}  
\end{aligned}
\end{equation} where we used the basic fact about the curvature tensor: $
\frac{D}{dt}\frac{D}{ds}X - \frac{D}{ds}\frac{D}{dt}X = R(\dot{q},q')X$, see
e.g., \cite[Lemma 4.1]{carmo1992riemannian}.
The following calculations are in order
(notice that we calculate along $q_*(\cdot)$, otherwise these are invalid):
\begin{equation} \label{eq:nabla u}
	\begin{aligned}
		 \nabla_{q'} u_{P} &=  - k_2 \nabla_{q'} \nabla F = k_2 q'   \\
		 \nabla_{q'} u_{D} &=  - k_1 \nabla_{q'} (\dot{q} - P_{q_*}^{q}
		\dot{q}_{*} ) = -  k_1 \nabla_{\dot{q}} q'\\
		 \nabla_{q'} u_{R} &= \nabla_{q'} R(\dot{q}, \nabla F) \dot{q}
		\\
				   & = (\nabla_{q'}R ) (\dot{q}, \nabla F)
				   \dot{q} + R(\nabla_{q'} \dot{q}, \nabla F)
				   \dot{q}  \\
				   & \quad + R(\dot{q}, \nabla_{q'} \nabla
				   F)\dot{q} +  R(\dot{q}, \nabla F)
				   \nabla_{q'}\dot{q} \\
				   & = R(\dot{q}, \nabla_{q'} \nabla F) \dot{q}
				   \\
				   & = R( \dot{q}, q') \dot{q}
	\end{aligned}
\end{equation} where we have used the fact that $\nabla_{q'}\nabla F(q,\dot{q}_*)
|_{q=q_*(t)} = q'$, $\nabla F(q_*,q_*) =0$. The second line of \eqref{eq:nabla
u} holds because one
can take $s \mapsto q(s,t)$ as a geodesic. Substituting \eqref{eq:nablaEL} and
\eqref{eq:nabla u} into the EL equation we immediately get
\begin{equation} \label{sys:linD}
	\frac{D^{2} q'}{dt} = - k_1 \frac{Dq'}{dt} - k_2 q' - \nabla_{q'} \nabla
	V.
\end{equation} 

\begin{theorem} \label{thm:track}
	Let $(q_*(\cdot), \dot{q}_*, u_* \equiv 0)$ be a bounded feasible solution to
	the fully-actuated Euler-Lagrangian system \eqref{eq:EL-full}.
	If the Hessian of the potential function $V$ is bounded along
	$q_*(\cdot)$, then the controller \eqref{ctrlu} renders $(q_*(\cdot),
	\dot{q}_*(\cdot)$ LES for $k_1>0$ and $k_2>0$ large enough.
\end{theorem}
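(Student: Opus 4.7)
The plan is to use Theorem~\ref{thm:lag-simp} together with Proposition~\ref{prop:LES in W} to reduce LES of $(q_*(\cdot),\dot{q}_*(\cdot))$ to uniform exponential stability, in the Sasaki norm, of the second-order linear time-varying variational equation \eqref{sys:linD} along $q_*(\cdot)$, and then to establish that stability with a quadratic Lyapunov function of mechanical-energy type augmented by a cross term.

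First I would check that the closed-loop dynamics is still of the form \eqref{sys:nabla} and that $u(q_*,\dot{q}_*)\equiv 0$, so $q_*(\cdot)$ remains an integral curve; the computations \eqref{eq:nablaEL}--\eqref{eq:nabla u} then show that the variational system \eqref{sys:sasaki} for the closed loop collapses to \eqref{sys:linD} along $q_*(\cdot)$. Thus by Theorem~\ref{thm:lag-simp} and Proposition~\ref{prop:LES in W} it suffices to prove uniform exponential decay of $\bigl(|q'|^{2}+|Dq'/dt|^{2}\bigr)^{1/2}$ for solutions of \eqref{sys:linD}.

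For that linear equation I would use the Lyapunov candidate
\begin{equation*}
L(t) = \frac{1}{2}\left|\frac{Dq'}{dt}\right|^{2} + \frac{k_{2}}{2}|q'|^{2} + \epsilon\left\langle q', \frac{Dq'}{dt}\right\rangle ,
\end{equation*}
with $\epsilon>0$ small, so that $L$ is equivalent to the squared Sasaki norm (the cross term is the standard trick in PD-controlled mechanical systems used to obtain a strict decrease rate). Differentiating along \eqref{sys:linD} using metric compatibility of $\nabla$ and substituting $D^{2}q'/dt^{2}=-k_{1}Dq'/dt - k_{2}q' - \Hess V(q')$, the two $\pm k_{2}\langle q', Dq'/dt\rangle$ summands cancel and one is left with
\begin{equation*}
\dot{L} = -(k_{1}-\epsilon)\left|\frac{Dq'}{dt}\right|^{2} - \epsilon k_{2}|q'|^{2} - \epsilon k_{1}\left\langle q',\frac{Dq'}{dt}\right\rangle - \left\langle \frac{Dq'}{dt}+\epsilon q',\Hess V(q')\right\rangle .
\end{equation*}

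Setting $H:=\sup_{t\ge 0}\|\Hess V\|_{q_*(t)}$, which is finite by hypothesis, Young's inequality dominates the last two summands by $c_{1}|Dq'/dt|^{2}+c_{2}(H)|q'|^{2}$ for some explicit small constants. Choosing first $\epsilon$ sufficiently small, then $k_{1}$ large so that the coefficient of $|Dq'/dt|^{2}$ is negative, and finally $k_{2}$ large to dominate $\epsilon k_{1}/2 + c_{2}(H)$ in the $|q'|^{2}$ coefficient, turns the whole quadratic form negative definite and yields $\dot{L}\le -\alpha L$ for some $\alpha>0$. The main obstacle is the $\Hess V$ term, which is generically indefinite along $q_*(\cdot)$; handling it is exactly where the boundedness hypothesis on $\Hess V$ is used, to absorb small-gain estimates into the damping and stiffness gains. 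Once $\dot L\le -\alpha L$ is obtained, equivalence of $L$ with the squared Sasaki norm gives exponential convergence of $(q',Dq'/dt)$, and Proposition~\ref{prop:LES in W} concludes LES of $(q_*(\cdot),\dot{q}_*(\cdot))$.
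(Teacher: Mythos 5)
Your proposal is correct and follows essentially the same route as the paper: reduce, via the computations \eqref{eq:nablaEL}--\eqref{eq:nabla u} and Theorem~\ref{thm:lag-simp} together with Proposition~\ref{prop:LES in W}, to exponential stability of the linear time-varying equation \eqref{sys:linD}, which the paper simply declares obvious under the bounded-Hessian hypothesis. Your cross-term Lyapunov function $L$ merely supplies the detail the paper omits, and the estimate goes through (indeed any fixed $k_1>0$ suffices once $\epsilon$ is taken small relative to $k_1$ and $k_2$ large, matching the paper's slightly stronger reading of the gain conditions).
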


\begin{proof}
If the Hessian of $V$ is bounded along $q_*(\cdot)$, then it is
obvious that the ``linear system'' \eqref{sys:linD} is exponentially stable
setting $k_1>0$ and
choosing $k_2>0$ large enough. The theorem follows invoking Theorem
\ref{thm:lag-simp}.
\end{proof}

\begin{remark}
	Note that the assumption of Theorem \ref{thm:track} holds if $V\in
	\mathcal{C}^{2}$ as $q_*(\cdot)$ is bounded.
	If $V$ is (weakly) convex, then the Hessian of $V$ is positive semi-definite,
	hence the same holds true for arbitrary positive constants $k_1, k_2$.
\end{remark}

\begin{remark}
In equation \eqref{sys:linD}, we have in fact obtained the celebrated Jacobi
equation by setting $u=0$ and $V=0$:
\begin{equation}\label{eq:Jacobi}
	\frac{D^2 q'}{dt^2} = -R(\dot{q},q')\dot{q}.
\end{equation}
Since in this case the EL equation reads $\nabla_{\dot{q}}\dot{q} =0$ (geodesic
equation), 
equation \eqref{eq:Jacobi} characterizes the effect of curvature to the geodesic flow.
The Jacobi equation plays a significant role in Riemannian geometry and has many
important implications. In order to help readers from the control community 
appreciate this equation, we now provide a control flavour to it.

For (\ref{eq:Jacobi}), choose a ``Lyapunov function''
\[
	V(\dot{q},q') = \langle \frac{D q'}{dt}, \frac{D q'}{dt} \rangle 
	+ \langle R(\dot{q}, q')\dot{q}, q' \rangle.
\]
Since we work only locally, let us consider a constant curvature manifold, that is
\[
	\langle R(\dot{q},q')\dot{q}, q'\rangle = K \langle \dot{q},\dot{q}
	\rangle \langle q', q' \rangle, \quad \forall \dot{q}, q'
\]for some constant $K$. The time derivative of $V$ reads
\begin{align*}
	\dot{V} &= 2 \langle \frac{D^2 q'}{dt^2},\frac{D q'}{dt} \rangle + 
	\langle R(\dot{q}, \frac{Dq'}{dt})\dot{q}, q'\rangle  + \langle
	R(\dot{q},q')\dot{q}, \frac{Dq'}{dt} \rangle \\
		&= 2 \langle -R(\dot{q},q')\dot{q},\frac{D q'}{dt} \rangle + 2
		\langle R(\dot{q},q')\dot{q}, \frac{Dq'}{dt} \rangle \\
		&= 0,
\end{align*} where we have used the fact that $\frac{D\dot{q}}{dt}=0$. Remember
that $q(\cdot)$ is a geodesic, we may assume $|\dot{q}|=1$, then it follows that
\[
	V(\dot{q},q') = |Dq'/dt|^2 + K |q'|^2  = \text{ constant}.
\] 
Therefore, we can draw the following non-rigorous conclusions:
\begin{itemize}
	\item $K>0$: along a given geodesic, nearby geodesics oscillate around
		it (see Fig. \ref{fig:K>0}).
	\item $K<0$: along a given geodesic, nearby geodesics have a trend to
		diverge.

	\item $K=0$: the geodesics neither converge nor diverge.
\end{itemize}
\end{remark}

\begin{figure}[ht]
\begin{center}
	\includegraphics[scale = 0.8]{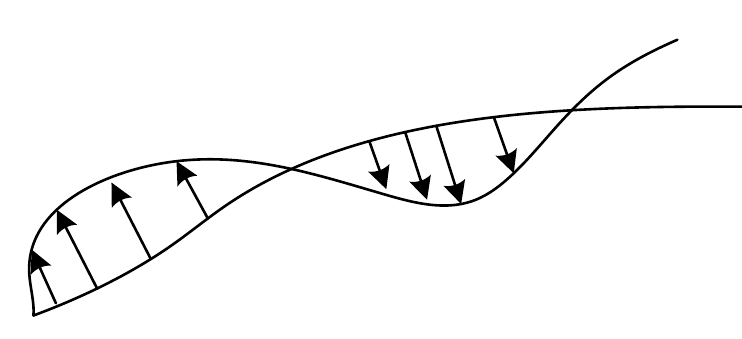}
	\caption{For $K>0$, the geodesics oscillate near a given geodesic.}
	\label{fig:K>0}
\end{center}
\end{figure}

In the above we have studied tracking controller design for fully-actuated EL
systems. This problem becomes more involved for under-actuated systems. In that
case, we may apply energy shaping method to obtain some matching conditions and
then try to solve some PDEs on the manifolds \cite{Ortega2002}, see also
\cite{Blankenstein2002} and the references therein.

\subsection{Speed Observer for EL Systems} \label{exmp:rouchon}
Consider the EL system without input
\begin{equation}\label{sys:lag-U}
	\nabla_{\dot{q}}\dot{q} =  -\nabla V(q)
\end{equation}where $V(q)$ is the potential energy. The objective is to design
a speed observer for $\dot{q}(\cdot)$ knowing $q(\cdot)$. 
In \cite{aghannan2003intrinsic}, Aghannan and Rouchon proposed the following
intrinsic speed observer for the system (\ref{sys:lag-U}) when there is no
potential energy in the EL equation:
\begin{equation} \label{sys:rouchon}
	\left\{
		\begin{aligned}
			\dot{\hat{q}} & = \hat{v} - \alpha \nabla F(\hat{q},q) \\
			\nabla_{\dot{\hat{q}}} \hat{v} &= - \beta \nabla
			F(\hat{q},q) + R(\hat{v}, \nabla F)\hat{v}.
		\end{aligned}
	\right.
\end{equation}where $F$ is half of the square distance as before. 
The convergence of this observer was analyzed in local coordinates via
contraction analysis \cite{aghannan2003intrinsic}, which was, in our opinion,
quite tedious.

\begin{remark}
	Using the notation introduced in Section \ref{subsubsec:Kill}, we may rewrite
	\eqref{sys:rouchon} as 
	\[
	\left\{
		\begin{aligned}
			\dot{\hat{q}} & = \hat{v} + \alpha \log_{\hat{q}} q\\
			\nabla_{\dot{\hat{q}}} \hat{v} &=  \beta \nabla
		\log_{\hat{q}} q - R(\hat{v}, \log_{\hat{q}}q)\hat{v}.
		\end{aligned}
	\right.
	\]obviating the use of the square distance function.
\end{remark}

In this subsection, we provide a much simpler proof using the methods developed
in this paper. Note that our model contains non-vanishing potential energy
function, thus it is an extension to the free Lagrangian case in
\cite{aghannan2003intrinsic}.


To cope with the potential energy, we consider a slightly modified
version of \eqref{sys:rouchon}:  
\begin{equation} \label{sys:rouchon+u}
	\left\{
		\begin{aligned}
			\dot{\hat{q}} & = \hat{v} - \alpha \nabla F(\hat{q},q) \\
			\nabla_{\dot{\hat{q}}} \hat v &= - \beta \nabla F(\hat{q},q) +
			R(\hat{v}, \nabla F)\hat{v} - P_{q}^{\hat{q}} \nabla
			V(q).
		\end{aligned}
	\right.
\end{equation}Note that by construction, $(q(\cdot), \dot{q}(\cdot))$ is a
solution to the observer. Hence it suffices to study LES of $(q(\cdot),
\dot{q}(\cdot))$.

Substituting $\hat{v} = \dot{\hat{q}} + \alpha \nabla F(\hat{q},q)$ into the
second line of \eqref{sys:rouchon+u}, we get
\begin{align*}
	\nabla_{\dot{\hat{q}}}(\dot{\hat{q}} + \alpha \nabla F)
	= & -\beta \nabla F + R(\dot{\hat{q}}+\alpha \nabla F, \nabla
	F)(\dot{\hat{q}}+\alpha \nabla F ) \\
	  & \; - P_{q}^{\hat{q}} \nabla V(q)
\end{align*}or
\begin{align*}
	\nabla_{\dot{\hat{q}}}\dot{\hat{q}} = - \alpha  \nabla_{\dot{\hat{q}}} \nabla F -
	& \beta \nabla F + R(\dot{\hat{q}}, \nabla F)(\dot{\hat{q}}+\alpha \nabla F) \\
	& - P_{q}^{\hat{q}} \nabla V(q)
\end{align*}
Taking covariant derivative along $q(\cdot)$ on both sides yields
\begin{equation}\label{eq:rouchon1}
	\nabla_{{q}'}\nabla_{\dot{\hat{q}}}\dot{\hat{q}} = \frac{D^2 {q}'}{dt^2} +
	R(\dot{\hat{q}},{q}')\dot{\hat{q}},
\end{equation} on the left, and
\begin{align*}	
	& -\alpha \nabla_{{q}'}\nabla_{\dot{\hat{q}}}\nabla F - \beta
	\nabla_{{q}'}
	\nabla F + \nabla_{{q}'}[R(\dot{\hat{q}}, \nabla F)(\dot{\hat{q}}+\alpha
	\nabla F)] \\
	 = & -\alpha \nabla_{\dot{\hat{q}}} \nabla_{{q}'} \nabla F - \alpha
	 R(\dot{\hat{q}},{q}')\nabla F - \beta \nabla_{{q}'} \nabla F  \\
	   & \; + \nabla_{{q}'}[R(\dot{\hat{q}}, \nabla F)(\dot{\hat{q}}+\alpha \nabla F)] \\
	 = & -\alpha \nabla_{\dot{\hat{q}}} {q}' - \beta  {q}' +
	 R(\dot{\hat{q}},\nabla_{{q}'}\nabla F)\dot{\hat{q}}   \\
	 = & -\alpha \nabla_{\dot{\hat{q}}}{q}' - \beta {q}' +
	 R(\dot{\hat{q}},{q}')\dot{\hat{q}},
\end{align*} on the right, where we have used the relations $\nabla F |_{\hat{q} = q} =0 $,
$\nabla_{q'} \nabla F |_{\hat{q}=q} = q'$ and $\nabla_{q'}P_{q}^{\hat{q}} \nabla
V(q) = 0$ (be $q'$ tangent to a geodesic).
Combining this with (\ref{eq:rouchon1}) yields
\begin{equation}
	 \frac{D^2q'}{dt^2} + \alpha\frac{Dq'}{dt}+\beta q' =0.
\end{equation} This, together with Theorem \ref{thm:lag-simp} shows the local
exponential convergence of the observer.

\begin{remark}
Notice that in both the tracking controller and observer design, we have to
calculate the geodesic distance. Although there are efficient computation
schemes, it is still tempting to avoid computing geodesics. This may be solved
by embedding the system into Euclidean space and use equivalent distance functions in
Euclidean spaces. The example of observer design on $SO(3)$ in Section
\ref{subsec:revisit} has used this method.
\end{remark}
\section{Conclusion}
In this paper, we have proposed a novel intrinsic approach for analyzing local
exponential stability of trajectories and contraction. The advantages of our
approach have been justified by applications and improved analysis of some existing
works in the literature. We leave studies of concrete examples including
under-actuated mechanical systems for future research.
\section{Acknowledgement}
We thank Prof. Antoine Chaillet, who gave important comments and suggestions
through the writing of the paper.

\section{Appendix}

We collect some elementary formulas in Riemannian geometry as a reference for the
reader. They can be found in standard texts such as \cite{carmo1992riemannian,
petersen2006riemannian}. Let $(M,g)$ be a smooth
Riemannian manifold. The Levi-Civita connection
on $M$ is compatible with the metric $g$: for any three vector fields $X,Y,Z \in
\Gamma(M)$, $
	X \left< Y, Z \right> = \left< \nabla_X Y, Z \right> + \left< Y,
	\nabla_X Z \right>$.
The Levi-Civita connection is torsion-free in the sense that $\nabla_{X} Y -
\nabla_Y X = [X,Y]$,
where $[X,Y]$ is the Lie bracket. Given a curve $q: t \mapsto
q(t)$ in $M$ and a vector field
$v(t)$ along $q(\cdot)$, the covariant derivative of $v(\cdot)$ along $q(\cdot)$
is defined as $\frac{Dv(t)}{dt} := \nabla_{\dot{q}(t)} v(t)$. Given a
$2$-surface parameterized by $(s,t) \mapsto q(s,t) $, then there holds
\begin{equation} \label{eq:swap-cov}
	\frac{D}{ds}\frac{\partial q}{\partial t}  = 
	\frac{D}{dt}\frac{\partial q}{\partial s}.
\end{equation}
The gradient of a scalar function $f$ on $M$ is defined as the unique vector
$\nabla f$ satisfying $
	\left< \nabla f, X \right> = df(X)$.
The Hessian of a scalar function is a symmetric bilinear form on
$TM$ defined as
\begin{equation} \label{eq:Hess}
	\Hess f (X,Y) := \left< \nabla_X \nabla f , Y \right>, \; \forall X,Y \in
	\Gamma(M).
\end{equation}
For a parameterized surface $(s,t)\to
q(s,t)$ and a vector field along the surface, there holds
\begin{equation}	
\frac{D}{ds}\frac{DX}{dt} - \frac{D}{dt}\frac{DX}{ds}  = R
	\left( \frac{\partial q}{\partial t}, \frac{\partial q}{\partial s} 
	\right)X.
\end{equation}

A metric on a Lie group $G$ is bi-invariant if it is both left-invariant,
i.e., $dL_{x} \left< v,w \right> = \left< v, w \right>$ and right-invariant. For
a bi-invariant metric, the Levi-Civita connection admits a simple formula
\begin{equation} \label{eq:Lie-Levi}
	\nabla_X Y = \frac{1}{2} [X,Y].
\end{equation}
A vector field $X$ on is called a Killing field (w.r.t. $g$) if $L_X g =0$.
Consequently, if $X$ is Killing, $Y$ an arbitrary vector field, there holds
\begin{equation} \label{eq:Killing}
	g(\nabla_Y X, Y)=0.
\end{equation}

\begin{lemma}\rm
\label{lem: dist}Given $\gamma_{1},\gamma_{2}\in\mathcal{C}^{1}(\mathbb{R}
_{+};M)$, where $M$ is a Riemannian manifold. If 
$\gamma_{1}(0)=\gamma_{2}(0)=x$
and $\gamma_{1}^{\prime}(0)=\gamma_{2}^{\prime}(0)=v$,
then $d(\gamma_{1}(s),\gamma_{2}(s))=O(s^{2})$
when $s>0$ is sufficiently small.
\end{lemma}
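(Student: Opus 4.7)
The plan is to reduce the estimate to an elementary Euclidean calculation by working in Riemannian normal coordinates centered at the common starting point $x$. Choose a normal chart $\phi : U \to \mathbb{R}^n$ with $\phi(x) = 0$; the defining features of normal coordinates are that $g_{ij}(0) = \delta_{ij}$ and all Christoffel symbols vanish at the origin. A standard consequence is that, on a small enough neighborhood of $x$, the Riemannian distance is comparable to the coordinate Euclidean distance, i.e.\ there exists a constant $C > 0$ (which can in fact be chosen arbitrarily close to $1$ as the neighborhood shrinks) such that
\begin{equation*}
d(p,q) \le C \,|\phi(p) - \phi(q)|
\end{equation*}
for all $p, q$ sufficiently close to $x$.

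Next, I would transport the two curves into the chart, setting $\tilde\gamma_i(s) = \phi(\gamma_i(s))$. The hypotheses $\gamma_1(0) = \gamma_2(0) = x$ and $\gamma_1'(0) = \gamma_2'(0) = v$ translate into $\tilde\gamma_1(0) = \tilde\gamma_2(0) = 0$ and $\tilde\gamma_1'(0) = \tilde\gamma_2'(0) = \tilde v := d\phi_x(v)$. Taylor expanding each coordinate curve at $s = 0$ up to second order (using the integral form of the remainder, which is valid under the implicit $\mathcal{C}^2$ regularity needed for the $O(s^2)$ conclusion) yields
\begin{equation*}
\tilde\gamma_i(s) = s\,\tilde v + \tfrac{1}{2} s^2 a_i + o(s^2), \qquad i = 1,2,
\end{equation*}
for some vectors $a_1, a_2 \in \mathbb{R}^n$. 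Subtracting the two expansions kills the $O(1)$ and $O(s)$ terms, giving $|\tilde\gamma_1(s) - \tilde\gamma_2(s)| = O(s^2)$. Combining with the comparison between Riemannian and Euclidean distance in normal coordinates yields
\begin{equation*}
d(\gamma_1(s), \gamma_2(s)) \le C\,|\tilde\gamma_1(s) - \tilde\gamma_2(s)| = O(s^2),
\end{equation*}
which is the desired bound.

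The only genuinely delicate point is the regularity of the curves: strictly under $\mathcal{C}^1$, the best one can deduce from the fundamental theorem of calculus and continuity of $\gamma_i'$ is $|\tilde\gamma_1(s) - \tilde\gamma_2(s)| = o(s)$, not $O(s^2)$. So the second-order conclusion really needs $\mathcal{C}^2$ smoothness (or at least Lipschitz derivatives). Fortunately, in the application inside the proof of Theorem~\ref{thm:lin}, the two curves are $\alpha_1(s) = \exp_{X(t)}(s\,v(t))$ and $\alpha_2(s) = \phi(t;t_0,\gamma(s))$, which are smooth thanks to smoothness of the exponential map, the vector field $f$, and the geodesic $\gamma$; so the hypothesis is always satisfied in practice. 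An alternative, coordinate-free route would be to insert the geodesic $\exp_x(sv)$ between $\gamma_1(s)$ and $\gamma_2(s)$ via the triangle inequality and invoke the well-known fact that a $\mathcal{C}^2$ curve stays within $O(s^2)$ of its osculating geodesic at $s=0$; this avoids the explicit chart but ultimately relies on the same second-order Taylor information.
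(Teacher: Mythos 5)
Your proof is correct, but there is nothing in the paper to compare it against: Lemma~\ref{lem: dist} is stated in the appendix without proof, and the authors simply invoke it in the proof of Theorem~\ref{thm:lin}. Your normal-coordinates argument is the natural way to supply the missing proof, and each step is sound: in a normal chart at $x$ the Riemannian distance is bounded above by a constant multiple of the coordinate Euclidean distance on a small ball (take the coordinate segment as a competitor curve and use that $g_{ij}$ is uniformly comparable to $\delta_{ij}$ near the origin), and the second-order Taylor expansions of the two coordinate curves agree through order one, so their difference is $O(s^2)$. More importantly, you have caught a genuine flaw in the statement itself: under the stated $\mathcal{C}^1$ hypothesis the conclusion is false --- already in $M=\mathbb{R}$ the curves $\gamma_1(s)=s$ and $\gamma_2(s)=s+s^{3/2}$ satisfy all hypotheses yet $d(\gamma_1(s),\gamma_2(s))=s^{3/2}\neq O(s^2)$ --- so $\mathcal{C}^2$ (or Lipschitz first derivatives) is genuinely needed, and you correctly verify that the two curves $\alpha_1(s)=\exp_{X(t)}(sv(t))$ and $\alpha_2(s)=\phi(t;t_0,\gamma(s))$ to which the lemma is applied in Theorem~\ref{thm:lin} are smooth, so the application survives. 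The only point worth tightening is the distance comparison: you should note that the comparison constant is locally uniform and that the coordinate segment joining $\phi(\gamma_1(s))$ to $\phi(\gamma_2(s))$ stays in the chart for small $s$, which holds since both curves converge to the center; with that remark the argument is complete.
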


\bibliographystyle{IEEEtrans}
\bibliography{GeometricControl,IEEEabrv}

\end{document}